\def\input@path{{styles/}}
\newcommand{\UsePackage}[1]{%
  \IfFileExists{styles/#1.sty}{%
      \usepackage{styles/#1}%
   }{%
      \IfFileExists{../styles/#1.sty}{%
         \usepackage{../styles/#1}%
      }{%
         \usepackage{#1}%
      }%
   }%
}
\theoremstyle{plain}%
\newtheorem{theorem}{Theorem}[section]
\newtheorem{lemma}[theorem]{Lemma}
\newtheorem{corollary}[theorem]{Corollary}
\newtheorem{fact}[theorem]{Fact}
\newtheorem{proposition}[theorem]{Proposition}
\theoremstyle{plain}%
\newtheorem*{remark:unnumbered}[theorem]{Remark}%
\newtheorem{definition}[theorem]{Definition}
\newtheorem{example}[theorem]{Example}
\newcommand{\myqedsymbol}{\rule{2mm}{2mm}}
\theoremstyle{nonumberplain}%
\newtheorem{proof}{Proof:}%
\newtheorem{proofof}{Proof of\!}%
\providecommand{\emphind}[1]{}%
\renewcommand{\emphind}[1]{\emph{#1}\index{#1}}
\definecolor{blue25emph}{rgb}{0, 0, 11}
\providecommand{\emphic}[2]{}
\renewcommand{\emphic}[2]{\textcolor{blue25emph}{%
      \textbf{\emph{#1}}}\index{#2}}
\providecommand{\emphi}[1]{}%
\renewcommand{\emphi}[1]{\emphic{#1}{#1}}
\definecolor{almostblack}{rgb}{0, 0, 0.3}
\providecommand{\emphw}[1]{}%
\renewcommand{\emphw}[1]{{\textcolor{almostblack}{\emph{#1}}}}%
\providecommand{\emphOnly}[1]{}%
\renewcommand{\emphOnly}[1]{\emph{\textcolor{blue25}{\textbf{#1}}}}
\newcommand{\CaseyThanks}[1]{%
   \thanks{%
      Department of Mathematics; %
      University of Denver; %
      Clarence M. Knudson Building; %
      2390 South York Street; %
      Denver, CO, 80210, USA; %
      \href{mailto:casey.schlortt@du.edu}{casey.schlortt@du.edu} %
   #1%
   }%
}
\newcommand{\HLink}[2]{\hyperref[#2]{#1~\ref*{#2}}}
\newcommand{\HLinkSuffix}[3]{\hyperref[#2]{#1\ref*{#2}{#3}}}
\newcommand{\thmlab}[1]{{\label{theo:#1}}}
\newcommand{\thmref}[1]{\HLink{Theorem}{theo:#1}}
\newcommand{\explab}[1]{\label{exp:#1}}
\newcommand{\expref}[1]{\HLink{Example}{exp:#1}}%
\providecommand{\deflab}[1]{}
\renewcommand{\deflab}[1]{\label{def:#1}}
\newcommand{\defref}[1]{\HLink{Definition}{def:#1}}
\providecommand{\factlab}[1]{}
\renewcommand{\factlab}[1]{\label{fact:#1}}
\newcommand{\factref}[1]{\HLink{Fact}{fact:#1}}
\providecommand{\proplab}[1]{}
\renewcommand{\proplab}[1]{\label{prop:#1}}
\newcommand{\propref}[1]{\HLink{Proposition}{prop:#1}}
\newcommand{\lemlab}[1]{\label{lemma:#1}}
\newcommand{\lemref}[1]{\HLink{Lemma}{lemma:#1}}%
\providecommand{\eqlab}[1]{}%
\renewcommand{\eqlab}[1]{\label{equation:#1}}
\newcommand{\remove}[1]{}%
\newlist{compactenumA}{enumerate}{5}%
\setlist[compactenumA]{topsep=0pt,itemsep=-1ex,partopsep=1ex,parsep=1ex,%
   label=(\Alph*)}%
\newlist{compactenuma}{enumerate}{5}%
\setlist[compactenuma]{topsep=0pt,itemsep=-1ex,partopsep=1ex,parsep=1ex,%
   label=(\alph*)}%
\newlist{compactenumI}{enumerate}{5}%
\setlist[compactenumI]{topsep=0pt,itemsep=-1ex,partopsep=1ex,parsep=1ex,%
   label=(\Roman*)}%
\newlist{compactenumi}{enumerate}{5}%
\setlist[compactenumi]{topsep=0pt,itemsep=-1ex,partopsep=1ex,parsep=1ex,%
   label=(\roman*)}%
\newlist{compactitem}{itemize}{5}%
\setlist[compactitem]{topsep=0pt,itemsep=-1ex,partopsep=1ex,parsep=1ex,%
   label=\ensuremath{\bullet}}%
\numberwithin{figure}{section}%
\numberwithin{table}{section}%
\numberwithin{equation}{section}%
\newcommand{\N}{\mathbb{N}}
\newcommand{\Ob}{\mathcal{O}}
\newcommand{\A}{\mathcal{A}}
\begin{document}

\title{On the structure of sequences with minimal maximal pattern complexity}

\author{%
   Casey Schlortt%
   \CaseyThanks{}%
}

\date{\today}

\maketitle

\begin{abstract}
    In 2002, Kamae and Zamboni introduced maximal pattern complexity and determined that any aperiodic sequence must have maximal pattern complexity at least $2k$.  In 2006, Kamae and Rao examined the maximal pattern complexity of sequences over larger alphabets and showed that sequences which have maximal pattern complexity less than $\ell k$, for $\ell$ the size of the alphabet, must have some periodic structure. In this paper, we investigate the structure of sequences of low maximal pattern complexity over $\ell$ letters where $\liminf\limits\limits_{k \to \infty} p_{\alpha}^*(k) - 3k = -\infty$.  In addition, we show that the minimal maximal pattern complexity of an aperiodic sequence which uses all $\ell$ letters is $p_{\alpha}^*(k) = 2k + \ell -2$, and give an exact structure for aperiodic sequences with this maximal pattern complexity.
\end{abstract}

%%%%%%%%%%%%%%%%%%%%%%%%%%%%%%%%%%%%%%%%%%%%%%%%%%%%%%
%%%%%%%%%%%%%%%%%%%%%%%%%%%%%%%%%%%%%%%%%%%%%%%%%%%%%%

\section{Introduction}
There are various methods to study the complexity of symbolically defined dynamical systems.  Block (word) complexity, which counts the number of blocks of size $k$ which appear in a subshift, was introduced in 1938 by Morse and Hedlund as a way to determine the 'size' of different subshifts.  Their first paper, \cite{HM1}, contains the proof of the Morse-Hedlund Theorem which states that any aperiodic sequence must have block complexity at least $k+1$ for all $k \in \N$.  Their second paper, \cite{HM2}, introduces \textbf{Sturmian sequences}, defined to be infinite sequences over two letters which have block complexity $k+1$ for all $k \in \N$ and which can be represented as a coding of an irrational circle rotation.

For larger alphabets, a similar proof shows that the block complexity of an aperiodic sequence over $\ell \geq 2$ letters must be at least $k +\ell - 1$. A \textbf{Sturmian sequence  over $\ell \geq 2$ letters} is defined to be an aperiodic sequence with block complexity $k+\ell - 1$ for all $k \in \N$ \cite{Fogg}.

The maximal pattern complexity of an infinite sequence $\alpha$, denoted $p_{\alpha}^*(k)$, was introduced by Kamae and Zamboni in 2002 in \cite{Kamae_1} as a way to link word complexity and sequence entropy.  In their first paper, they showed that the maximal pattern complexity of any aperiodic sequence must be at least $2k$ for all $k \in \N$, which is a direct analogue to the  Morse-Hedlund Theorem.

In \cite{Kamae_1}, they also introduced \textbf{pattern Sturmian sequences} as an analogue to Sturmian sequences, and defined pattern Sturmian sequences over two letters to be aperiodic sequences with the lowest possible maximal pattern complexity, $2k$.  In \cite{Kamae_2}, they furthered their study of pattern Sturmian sequences, showing that sequences generated by a coding of an irrational circle rotation by any two interval partition of a circle and simple Toeplitz sequences are examples of pattern Sturmian sequences over two letters.  

In 2006, Kamae and Rao extended the study of maximal pattern complexity for sequences over finite alphabets of size greater than two in \cite{ell_let}. They determined that a sequence over $\ell \geq 2$ letters which has maximal pattern complexity less than $\ell k$ for some $k \in \N$ is \textbf{periodic by projection}, meaning there is a nonconstant, coordinatewise, factor map $f$ which maps the sequence to a periodic sequence.  They extended the definition of pattern Sturmian sequences over two letters to \textbf{pattern Sturmian sequences over $\boldsymbol{\ell \geq 2}$ letters}, defining them to be sequences of maximal pattern complexity $\ell k$ which are not periodic by projection. 

As stated in their paper, their results hold for sequences in which every letter appears infinitely often.  We will make this same assumption for all sequences in this paper, since the ability for a letter to show up a finite number of times can be eliminated with a finite shift.

Although \cite{ell_let} isolates the minimal maximal pattern complexity for aperiodic sequences which are not periodic by projection, there are aperiodic sequences with lower pattern complexity.  The goal of this work is to isolate the lowest maximal pattern complexity for any aperiodic sequence and determine the structure of such sequences.  We define a \textbf{strong pattern Sturmian sequence} over $\ell \geq 2$ letters to be an aperiodic sequence with maximal pattern complexity $2k + \ell - 2$.  As seen in \thmref{LB}, this is the lowest possible maximal pattern complexity an aperiodic sequence can have (given all letters of the alphabet appear in the sequence), and this notion aligns closely with the definition of a Sturmian sequence over $\ell$ letters.  Furthermore, since the notions of pattern Sturmian sequences and strong pattern Sturmian sequences coincide when the alphabet is two letters, we will restrict our study to sequences for which the size  of the alphabet is greater than two.

In this paper, we introduce three main results.  The first result, \thmref{main}, gives a general structure for aperiodic sequences with low maximal pattern complexity. \expref{4k_2aper} demonstrates that the bound of $4k$ in \thmref{main} cannot be substantially improved.

\begin{theorem}
    \thmlab{main}
    Let $\alpha$ be an aperiodic sequence such that $\#\A_{\alpha} = \ell \geq 3$.
    If 

    \begin{equation}
    \liminf\limits\limits_{k \to \infty} p_{\alpha}^*(k) - 3k = -\infty
    \end{equation}
    then there exists some decomposition of $\alpha$ into $m>1$ residues 
    \[\{\alpha^{(i)} = \alpha_i\alpha_{i+m}\alpha_{i+2m}... : 0 \leq i \leq m-1\}\]
    such that exactly one of these residues is eventually aperiodic over two letters, $a$ and $b$, and any residue containing finitely many $a$ and $b$ symbols is eventually constant.

    If $\alpha$ is uniformly recurrent, then the bound in Equation (1.1) can be improved to $4k$.
\end{theorem}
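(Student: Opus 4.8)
The plan is to prove the sharpened statement in contrapositive form: assuming $\alpha$ is uniformly recurrent, aperiodic, with $\#\A_{\alpha} = \ell \ge 3$, and that the residue decomposition described in \thmref{main} \emph{fails}, I would show that there is a constant $C$ with $p_{\alpha}^*(k) \ge 4k - C$ for all large $k$, which immediately contradicts $\liminf_{k\to\infty}\pth{p_{\alpha}^*(k) - 4k} = -\infty$. This parallels the proof of the general statement, where the same failure of the decomposition yields only the weaker bound $p_{\alpha}^*(k) \ge 3k - C$. The entire task is therefore to locate the single step in that argument where the constant $3$ is produced and to show that uniform recurrence upgrades it to $4$.

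The mechanism I expect to drive the improvement is a \emph{lockstep} reading of two residues. Absent the decomposition, the analysis behind \thmref{main} forces at least two residue sequences, say $\alpha^{(i_1)}$ and $\alpha^{(i_2)}$, to be simultaneously non-eventually-constant in an essential way. For a window of size $2k$ whose coordinates split into two classes $c_1,c_2 \pmod m$, and for base points restricted to a single class $n \equiv n_0 \pmod m$ chosen so that the $c_1$-coordinates land in residue $i_1$ and the $c_2$-coordinates in residue $i_2$, the resulting patterns are exactly the joint patterns of $\pth{\alpha^{(i_1)},\alpha^{(i_2)}}$ read with a common shift. Thus $p_{\alpha}^*(2k)$ is bounded below by the maximal pattern complexity of this lockstep join. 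In the general case one can only guarantee that each residue contributes separately, netting $3k$; the point of uniform recurrence is that minimality makes every joint pattern consistent with the two residues actually occur at some admissible base point, so the join genuinely behaves like two independent aperiodic factors and its maximal pattern complexity is bounded below by $4k - C$.

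Equivalently, and perhaps more cleanly, I would route the argument through return words. By minimality I may fix a factor $w$, pass to the derived sequence $\alpha'$ on the return words to $w$, and invoke the standard comparison between $p_{\alpha'}^*$ and $p_{\alpha}^*$. Uniform recurrence guarantees that $\alpha'$ is itself aperiodic and still uses at least three essential letters, while the recoding compresses the complexity offset so that $\liminf\pth{p_{\alpha}^*(k) - 4k} = -\infty$ translates into $\liminf\pth{p_{\alpha'}^*(k) - 3k} = -\infty$. Applying the $3k$ conclusion of \thmref{main}, established in the first part of the proof, to $\alpha'$ produces a decomposition of $\alpha'$, which I then lift back to a residue decomposition of $\alpha$ of the required form. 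Here the fact that uniform recurrence turns ``eventually constant'' into ``constant'' and ``eventually aperiodic'' into ``aperiodic'' is precisely what makes the lift preserve the exact structure: one residue aperiodic over two letters, the rest constant.

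The main obstacle, in either route, is controlling the extra $k$ \emph{uniformly}. In the lockstep approach I must prove that the join of the two essential residues really has maximal pattern complexity at least $4k - C$ with a single constant $C$ independent of $k$, which requires ruling out the degenerate case in which the two residues are coordinatewise dependent; that dependence is exactly the periodic-by-projection behavior that would instead feed the decomposition. In the return-word approach the delicate point is the lift, since recoding by return words can redistribute positions among residue classes, and I must verify that the modulus and the residue-by-residue structure of the decomposition of $\alpha'$ pull back to a genuine residue decomposition of $\alpha$ with the same qualitative description. I expect the lockstep/dependence dichotomy to be where essentially all the work lies, with uniform recurrence entering only to certify that the ``independent'' alternative truly realizes the full $4k$ worth of patterns.
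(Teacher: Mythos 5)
Your contrapositive framing matches the paper's: in \thmref{gen_lemR} the paper likewise supposes a second aperiodic residue exists and derives $\#F_{\alpha}(\tau')\geq 3k-C$ (general case) or $\geq 4k-C$ (uniformly recurrent case), contradicting the liminf hypothesis. But the mechanism you propose for the $4k$ upgrade has a genuine gap. Your lockstep route rests on the claim that minimality forces every joint pattern consistent with the two residues to actually occur at an admissible base point, so that the join behaves like an independent product. Minimality gives nothing of the sort: consider the paper's own \expref{4k_2aper}, in which the two aperiodic residues are the \emph{same} Fibonacci Sturmian sequence $x$. There the joint patterns at a common base point are confined to a diagonal (both coordinates read $x$ over a merged index set), so the join realizes on the order of $4k$ joint patterns rather than the roughly $(2k)^2$ a product would supply; genuine independence is simply false. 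Worse, your proposed escape from this degeneracy --- that coordinatewise dependence ``is exactly the periodic-by-projection behavior that would instead feed the decomposition'' --- fails on the same example: the two residues are maximally dependent (identical), yet the decomposition demanded by \thmref{main} does \emph{not} exist for that sequence (it has two aperiodic residues), and consistently its complexity is exactly $4k$. So your dichotomy (independent join, or else the decomposition exists) leaves precisely this case uncovered, and it is the case the theorem must handle.

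What the paper actually proves is weaker than independence and is sufficient: \thmref{minimal}, obtained by taking a recurrent point of the product system $\overline{\Ob(\alpha^{(p)})}\times\overline{\Ob(\alpha^{(i)})}$ (Poincar\'e recurrence) and running the Kamae--Zamboni extension argument for both coordinates simultaneously, yields a \emph{single} window $\tau$ on which each residue separately admits at least $2k$ patterns. The contributions are then added, not multiplied: in the window $\tau'=m\tau\cup\{h_j\}$ the singular letters act as markers, so words starting in residue class $p$ and words starting in class $i$ can never coincide, giving $\#F_{\alpha}(\tau')\geq \#C_p(\tau',\alpha)+\#C_i(\tau',\alpha)\geq 4k-4(m-1)$. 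No statement about joint patterns of the pair is ever needed. Your second route, via return words, is also not viable as sketched: maximal pattern complexity ranges over arbitrary windows, and recoding by variable-length return words neither preserves windows nor supports the claimed conversion of the offset from $4k$ to $3k$; no such comparison lemma exists in the paper or in the results it relies on, and the derived sequence's alphabet consists of return words, so even the assertion that it ``still uses at least three essential letters'' would need an argument. The missing idea, concretely, is the common-window theorem obtained from recurrence in the product system, combined with marker-based disjointness of starting classes in place of any independence of the join.
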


The second result, \thmref{S_C}, states that any strong pattern Sturmian sequence must consist of a pattern Sturmian sequence over two letters threaded together with constant sequences.  \expref{Lowest_Complexity} demonstrates the existence of sequences which are strong pattern Sturmian.

\begin{theorem}
    \thmlab{S_C}
    Let $\alpha$ be an aperiodic sequence such that $\#\A_{\alpha} = \ell \geq 3$.
     If $p_{\alpha}^*(k) = 2k + \ell -2$ for all $k \in \N$, then the aperiodic residue from \thmref{main} must be a pattern Sturmian over two letters and all other residues must be constant.
\end{theorem}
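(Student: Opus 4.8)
The plan is to invoke \thmref{main} to obtain a residue decomposition and then use the exact value $p_{\alpha}^*(k)=2k+\ell-2$ to pin down each piece. First I would verify the hypothesis of \thmref{main}: since $p_{\alpha}^*(k)-3k=(\ell-2)-k\to-\infty$, \thmref{main} applies and yields a decomposition into $m>1$ residues $\{\alpha^{(i)}\}_{i=0}^{m-1}$ with a single residue $\alpha^{(i_0)}$ eventually aperiodic over $\{a,b\}$ and every residue containing finitely many $a,b$ eventually constant. By the contrapositive of that last clause, the only way a residue $\alpha^{(i)}$ with $i\neq i_0$ can fail to be (eventually) constant is to be eventually periodic, non-constant, and to contain infinitely many $a$ and $b$. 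It therefore remains to prove: (A) $\alpha^{(i_0)}$ is pattern Sturmian over two letters, i.e.\ $p_{\alpha^{(i_0)}}^*(k)=2k$; and (B) every residue $\alpha^{(i)}$ with $i\neq i_0$ is constant.

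For (A) I would establish the additive lower bound $p_{\alpha}^*(k)\ge p_{\alpha^{(i_0)}}^*(k)+(\ell-2)$. Take a window $\sigma=\{\sigma_1<\dots<\sigma_k\}$ optimal for $\alpha^{(i_0)}$ and transport it to $m\sigma=\{m\sigma_1<\dots<m\sigma_k\}$ on $\alpha$. Base points $n\equiv i_0\pmod m$ sample $\alpha^{(i_0)}$ through $\sigma$ and realize all $p_{\alpha^{(i_0)}}^*(k)$ of its patterns, each a word over $\{a,b\}$ and hence with first coordinate in $\{a,b\}$. For each of the $\ell-2$ letters $c\notin\{a,b\}$, pick a base point in the residue class where $c$ occurs so that the coordinate $m\sigma_1$ lands on a $c$; the resulting pattern has first coordinate $c$ and is therefore distinct from every $\{a,b\}$-pattern above and from the analogous patterns of the other extra letters. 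This yields at least $p_{\alpha^{(i_0)}}^*(k)+(\ell-2)$ patterns. Combined with the hypothesis $p_{\alpha}^*(k)=2k+\ell-2$ (the minimum permitted by \thmref{LB}) and the Kamae--Zamboni bound $p_{\alpha^{(i_0)}}^*(k)\ge 2k$, this forces $p_{\alpha^{(i_0)}}^*(k)=2k$ for all $k$; that is, $\alpha^{(i_0)}$ is pattern Sturmian over two letters.

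For (B) I would argue by contradiction. Suppose some $\alpha^{(r_1)}$ with $r_1\neq i_0$ is non-constant; by the reduction above it is eventually periodic, non-constant, and contains infinitely many $a,b$. I would build a single window that simultaneously realizes the $2k$ Sturmian patterns at base class $i_0$ and the $\ell-2$ extra patterns at the other classes, while additionally using the non-constancy of $\alpha^{(r_1)}$ to force strictly more patterns --- either by appending a coordinate that samples $\alpha^{(r_1)}$ and splits the Sturmian patterns, or by letting $\alpha^{(r_1)}$ realize patterns not produced by $\alpha^{(i_0)}$ through the chosen window. Because $\alpha^{(r_1)}$ is periodic while $\alpha^{(i_0)}$ is aperiodic, the $\alpha^{(r_1)}$-sample cannot be a function of the (rotation-coded) Sturmian pattern, so for large $k$ the count strictly exceeds $2k+\ell-2$, contradicting the hypothesis. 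Finally, to upgrade ``eventually constant'' to ``constant'' I would use that $p_{\alpha}^*(k)=2k+\ell-2$ holds for every $k$, including small $k$: a genuine transient in an otherwise constant residue would produce an additional pattern at some finite window and again break the exact count, the standing assumption that every letter appears infinitely often keeping the bookkeeping consistent.

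The hardest step will be (B), and within it the sub-case where $\alpha^{(r_1)}$ is periodic over the \emph{same} two letters $\{a,b\}$ (for example $abab\cdots$): here the patterns contributed by $\alpha^{(r_1)}$ are themselves $\{a,b\}$-words, so they need not be new, and I must exhibit a window for which the union of the Sturmian patterns (from class $i_0$) and the periodic patterns (from class $r_1$) exceeds $2k$. This requires balancing a trade-off --- spending window positions on $\alpha^{(i_0)}$ to retain the $2k$ term versus on $\alpha^{(r_1)}$ to extract extra patterns --- and resolving it should rely on the rigidity of pattern Sturmian sequences, whose patterns form a highly constrained family that a periodic coordinate cannot be subordinate to. A secondary but pervasive difficulty is the bookkeeping of letter coincidences (residues that reuse $a$ or $b$, or that share letters), which I would sidestep throughout by anchoring every distinctness claim on the first coordinate or on letters lying outside $\{a,b\}$.
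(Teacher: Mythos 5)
Your skeleton is the same as the paper's: your step (A) is exactly the paper's \lemref{relating} combined with \thmref{KZ_HM}, and your step (B) is the paper's closing contradiction argument. However, the two places you yourself flag as hard are precisely where the paper does its real work, and in both your proposal has genuine gaps.

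The first gap is the upgrade from the \emph{eventual} structure given by \thmref{main} to exact structure. This cannot be deferred to a closing sentence; it is the content of the paper's \lemref{looks_good}, and it must come \emph{before} your step (A). Your count $p_{\alpha}^*(k) \geq p_{\alpha^{(i_0)}}^*(k) + (\ell-2)$ needs every word of $F_{\alpha^{(i_0)}}(\sigma)$ to lie in $\{a,b\}^k$, and needs each letter $c \notin \{a,b\}$ to contribute a word distinct from all of those; if the aperiodic residue (or any other residue) has a finite transient involving other letters, neither claim is justified. Moreover, ``a transient produces an additional pattern'' is not automatic: one must exhibit a \emph{single} window that simultaneously realizes the $2k+\ell-2$ words already counted and a new one. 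The hardest transient, which your case analysis never mentions, is a residue whose tail is periodic over $\{a,b\}$ but which is not shift-invariant, i.e.\ $\sigma^t(\alpha^{(i)}) \neq \alpha^{(i)}$ for all $t$: such a residue is invisible both to your reduction (it is eventually periodic over $\{a,b\}$) and to your ``transient in an otherwise constant residue'' remark. The paper eliminates it with a dedicated window of size $q+1$ ($q$ the period of the tail) and a two-case count; nothing in your sketch replaces that.

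The second gap is in (B), where the decisive count is absent and the tool you reach for does not exist. Pattern Sturmian sequences over two letters are \emph{not} all codings of irrational rotations (simple Toeplitz sequences are pattern Sturmian), so the ``rigidity of the (rotation-coded) Sturmian patterns'' you invoke is unavailable; and the dependence you claim impossible points the unhelpful way. What actually works, and what resolves exactly the trade-off you describe, is elementary: take $\tau$ a maximal $(k-(m-1))$-sized window for $\alpha^{(i_0)}$ and set $\tau' = m\tau \cup \{h_1,\dots,h_{m-1}\}$ with $h_i \equiv i \bmod m$. The base class of $\alpha^{(i_0)}$ contributes at least $2k-2(m-1)$ words, and each of the other $m-1$ base classes contributes at least $3$ words: two because the nonconstant periodic residue $\alpha^{(r_1)}$ places both $a$ and $b$ in its slot, and a third because if the letter in the $\alpha^{(r_1)}$ slot always determined the letters in the $\alpha^{(i_0)}$ slots, then $\alpha^{(i_0)}$ would be periodic (constant, equal to $\alpha^{(r_1)}$, or its bit flip) --- aperiodicity alone, no rigidity. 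Finally $m \geq \ell$, since $\alpha^{(i_0)}$, $\alpha^{(r_1)}$, and the $\ell-2$ constant residues are pairwise distinct residue classes, so $\#F_{\alpha}(\tau') \geq 2k-2(m-1)+3(m-1) = 2k+m-1 \geq 2k+\ell-1$, beating the hypothesis by one. Without this count (or an equivalent one), your (B) is a statement of intent rather than a proof.
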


The third result, \thmref{CON}, is a partial converse of \thmref{S_C} which states that a sequence which can be decomposed into a pattern Sturmian sequence threaded together with constant sequences must have maximal pattern complexity $2k+C$ for some constant $C \in \N$.  \expref{Constant_Higher} demonstrates that a literal converse to \thmref{S_C} is untrue.  When $C=\ell - 2$, the sequence is strong pattern Sturmian.

\begin{theorem}
    \thmlab{CON}
    Let $\alpha$ be an aperiodic sequence such that $\#\A_{\alpha} = \ell \geq 3$.
    If $\alpha$ can be decomposed into residues such that one residue is a pattern Sturmian over two letters and all other residues are constant, then for sufficiently large $k$, $p_{\alpha}^*(k) = 2k + C$ for some $C \in \N$ with $C \geq \ell -2$.  
\end{theorem}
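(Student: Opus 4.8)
The plan is to prove the two assertions separately: that $p_\alpha^*(k)-2k$ is eventually constant, and that this constant is at least $\ell-2$. Write the hypothesized decomposition as $\{\alpha^{(i)} : 0 \le i \le m-1\}$, let $\beta := \alpha^{(j)}$ be the pattern Sturmian residue over the two letters $a,b$, and let every other residue $\alpha^{(i)}$ (for $i \neq j$) be the constant sequence with value $c_i$. Since $\#\A_\alpha = \ell$, the letters $\{a,b\}\cup\{c_i : i \neq j\}$ are exactly the $\ell$ letters of $\alpha$.

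First I would set up the per-residue bookkeeping for a single window. Fix a window $\tau = \{\tau_0 < \dots < \tau_{k-1}\}$ and split the indices $n \ge 0$ according to $r := n \bmod m$. Writing $n = r + mq$, the coordinate $\alpha_{n+\tau_i}$ equals $\beta_{t_i + q}$ when $(r+\tau_i)\equiv j \pmod m$, where $t_i = \lfloor (r+\tau_i)/m\rfloor$, and otherwise equals the fixed letter $c_{(r+\tau_i)\bmod m}$. Hence, setting $I_r = \{i : r + \tau_i \equiv j \pmod m\}$ and $W_r = \{t_i : i \in I_r\}$, the set of patterns seen at indices $n \equiv r$ (call it $S_r$) is in bijection with the $\beta$-patterns through $W_r$; in particular $|S_r| = p_\beta(W_r) \le p_\beta^*(|I_r|) = 2|I_r|$ when $I_r \neq \emptyset$, and $|S_r| = 1$ when $I_r = \emptyset$. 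Since each window index $i$ lies in $I_r$ for exactly one $r$, we have $\sum_r |I_r| = k$; summing the per-residue bounds over all $m$ residue classes gives $p_\alpha(\tau) \le \sum_r |S_r| \le 2k + \#\{r : I_r = \emptyset\} \le 2k + (m-1)$. Taking the supremum over $\tau$ shows that $p_\alpha^*(k)-2k$ is bounded above by $m-1$.

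Next I would upgrade boundedness to stabilization using monotonicity of the maximal pattern complexity. For any aperiodic sequence the increments satisfy $p_\alpha^*(k+1) \ge p_\alpha^*(k) + 2$ (this is the engine behind \thmref{LB}, consistent with $p_\alpha^*(1) = \ell$); granting it, $p_\alpha^*(k) - 2k$ is non-decreasing, and being bounded above and integer-valued it is eventually equal to some constant $C$, so $p_\alpha^*(k) = 2k + C$ for all large $k$. If the increment bound is not invoked as a black box, I would instead prove $p_\alpha^*(k+1)\ge p_\alpha^*(k)+2$ directly for $\alpha$: take a window $\tau$ achieving $p_\alpha^*(k)$, fix a residue class $r^*$ with $I_{r^*}\neq\emptyset$, and adjoin a new position $\tau_{\mathrm{new}} \equiv j - r^* \pmod m$ chosen far to the right so that extending $W_{r^*}$ by the corresponding $\beta$-coordinate raises $p_\beta(W_{r^*})$ by $2$, which is possible because $\beta$ is pattern Sturmian. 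Adjoining a coordinate only refines patterns and never merges them, so the total count cannot drop and the two new $\beta$-patterns persist. Verifying that these two extra patterns genuinely contribute $+2$ to the union $\bigcup_r S_r$ — rather than being absorbed by coincidences between $\beta$-values and the constants $c_i$ across different residue classes — is the step I expect to be the main obstacle, and it is where the structure of pattern Sturmian windows over two letters and the freedom in placing $\tau_{\mathrm{new}}$ must be used carefully.

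Finally, the lower bound on $C$ is immediate from \thmref{LB}: $\alpha$ is aperiodic and, since $\#\A_\alpha = \ell$, uses all $\ell$ letters, so $p_\alpha^*(k) \ge 2k + \ell - 2$ for every $k$. Combining this with $p_\alpha^*(k) = 2k + C$ for large $k$ yields $C \ge \ell - 2$, completing the proof.
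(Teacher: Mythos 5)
Your proposal is correct and takes essentially the same route as the paper's proof: the same residue-class bookkeeping (pattern Sturmian classes contribute at most twice their share of window positions, classes never meeting the Sturmian residue contribute one word each) yields $p_\alpha^*(k)\le 2k+O(1)$, the increment bound $p_\alpha^*(k+1)\ge p_\alpha^*(k)+2$ from \thmref{KZ_incr} upgrades boundedness of $p_\alpha^*(k)-2k$ to eventual constancy, and \thmref{LB} gives $C\ge\ell-2$, exactly as in the paper. As a side note, your constant $2k+(m-1)$ is actually the correct upper bound: the paper's claimed $2k+m-2$ rests on counting the constant-only starting classes as $m-1-\#I$ when there are in fact $m-\#I$ of them, an off-by-one witnessed by \expref{Lowest_Complexity} (where $m=\ell-1$ and $p_\alpha^*(k)=2k+m-1$), though this slip affects only the stated range of $C$ and not the theorem itself.
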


It is important to note that \thmref{main} allows for certain residues to be eventually periodic over two letters, but any nonconstant periodic residue must be periodic over the alphabet of the aperiodic residue.  However, \thmref{S_C} requires that any residue be aperiodic or constant.  Strong pattern Sturmian sequences also cannot have eventually constant residues and the aperiodic residue can only witness two letters. 

It is also important to recognize that in the case where $\ell = 3$, these results give a general structure for all aperiodic sequences $\alpha$ which have maximal pattern complexity less than $3k$. These sequences require a single aperiodic residue which is eventually over two letters, $a$ and $b$, threaded together with eventually constant sequences of the third letter $c$.

\section{Definitions and Preliminaries}
This section is dedicated to introducing definitions and results needed in the following sections.
\begin{definition}
    Let $\A$ be a finite set and $\alpha = \alpha_0\alpha_1\alpha_2...\in \A^{\N_0}$.  Then $\alpha$ is a  \textbf{(one-sided) infinite sequence over finite alphabet $\A$}.
\end{definition}

As mentioned in the introduction, we will only consider sequences in which a letter appearing once in the sequence means the letter appears infinitely in the sequence.

Throughout this paper, $\A^{\N_0}$ is endowed with the product topology.

\begin{definition}
    Let $\sigma: \A^{\N_0} \to \A^{\N_0}$ be the \textbf{shift map} defined by $\sigma(\alpha_0\alpha_1\alpha_2...) = \alpha_1\alpha_2...$.  We say that a \textbf{finite shift of $\boldsymbol{\alpha}$} is the sequence $\sigma^t(\alpha) = \alpha_t\alpha_{t+1}\alpha_{t+2}...$ for any integer $t \in \N_0$.  We say that the \textbf{orbit} of $\alpha$, denoted $\Ob(\alpha)$, is the set $\Ob(\alpha) :=\{\sigma^t(\alpha): t \in \N_0\}$.  The \textbf{orbit closure} of $\alpha$ is the orbit of $\alpha$ together with its limit points and is denoted $\overline{\Ob(\alpha)}$.
\end{definition}

    While $\alpha$ uses letters from $\A$, it may not use all letters from $\A$.  Thus to denote the letters which appear in an infinite sequence $\alpha$, we will use 
    
    $$\A_{\alpha} := \{a \in \A : \exists i\in \N_0 \text{ such that } \alpha_i = a\}.$$

\begin{definition}
    A sequence $\alpha$ is \textbf{periodic} if there exists some $t \in \N$ such that $\alpha = \sigma^t(\alpha)$. A sequence $\alpha$ is \textbf{eventually periodic} if there exists some $t \in \N$ such that $\sigma^t(\alpha)$ is periodic.  A sequence $\alpha$ is \textbf{aperiodic} if $\alpha$ is not periodic or eventually periodic.
\end{definition}

\begin{definition}
    A sequence $\alpha$ is \textbf{recurrent} if for all $L \in \N$ there exists some $M \in \N$ such that $\alpha_0...\alpha_{L-1} = \alpha_{M}...\alpha_{M+L-1}$.  If for all $L \in \N$, the set $R_L=\{M: \alpha_0...\alpha_{L-1} = \alpha_{M}...\alpha_{M+L-1}\}$ is syndetic (there are bounded gaps between subsequent elements), then $\alpha$ is \textbf{uniformly recurrent}.
\end{definition}

\begin{definition}
    A \textbf{$\boldsymbol{k}$-sized window $\boldsymbol{\tau}$} is an increasing set of $k$ indices beginning with 0: 
    $$\tau := \{0 = \tau(0) < \tau(1) < ... < \tau(k-1)\} \subset \N_0.$$ 

    We say a $(k+1)$-sized window $\tau'$ is an \textbf{immediate extension} of a $k$-sized window $\tau$ if $\tau' = \{\tau(0), \tau(1), ..., \tau(k-1), h\}$ for some $h > \tau(k-1)$.

    Given a $k$-sized window $\tau$, we will use the notation $m\tau$ to denote the $k$-sized window $m\tau : = \{m\tau(0), m\tau(1),..., m\tau(k-1)\}$.
\end{definition}

\begin{definition}
    A \textbf{$\boldsymbol{\tau}$-word over $\boldsymbol{\alpha}$} is a finite word of length $k$ denoted $\alpha[n + \tau]$ and defined to be
    $$\alpha[n + \tau] := \alpha_{\tau(0) + n}\alpha_{\tau(1) + n}...\alpha_{\tau(k-1) + n} \in \A^k$$
    for some $n \in \N_0$.

    The \textbf{set of all $\boldsymbol{\tau}$-words over $\boldsymbol{\alpha}$} is the set denoted by $F_{\alpha}(\tau)$ and defined by
    $$F_{\alpha}(\tau) := \{\alpha[n + \tau] : n \in \N_0\}.$$
\end{definition}

Since $F_{\alpha}(\tau)$ is finite, there exists some $L \in \N$ such that $F_{\alpha}(\tau) = \{\alpha[n + \tau] : n \in \N \cap [0, L)\}$.

\begin{definition}
    The \textbf{maximal pattern complexity} of an infinite sequence $\alpha$ is the function $p_{\alpha}^*:\N \to \N$ defined by 
    $$p_{\alpha}^*(k) := \sup_{\tau} \#F_{\alpha}(\tau)$$

    where the supremum is taken over all $k$-sized windows $\tau$.
\end{definition}

We note that for any $k \in \N$, there exists some $k$-sized window $\tau$ such that $p_{\alpha}^*(k) = \#F_{\alpha}(\tau)$ and we will call any such window $\tau$ a \textbf{maximal $\boldsymbol{k}$-sized window $\boldsymbol{\tau}$}.

In \cite{Kamae_1}, Kamae and Zamboni proved the following two theorems for maximal pattern complexity, the first which acts as an analogue to the Morse-Hedlund Theorem for block complexity and the second which relates the number of $\tau$-words to the number of $\tau'$-words in an aperiodic sequence $\alpha$ for some immediate extension $\tau'$ of $\tau$.

\begin{theorem}[\cite{Kamae_1}]
    \thmlab{KZ_HM}
    An infinite sequence $\alpha \in \A^{\N_0}$ for a finite alphabet $\A$ is eventually periodic if and only if $p_{\alpha}^*(k) \leq 2k - 1$ for some $k \in \N$.
\end{theorem}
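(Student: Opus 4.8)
The plan is to prove the two implications separately and to reduce the hard one to a statement about how fast $p_\alpha^*$ can grow. For the easy direction, suppose $\alpha$ is eventually periodic, say $\alpha_{n+q} = \alpha_n$ for all $n \geq t$, with eventual period $q$ and pre-period $t$. For any $k$-sized window $\tau$ and any $n \geq t$, each entry $\alpha_{n + \tau(i)}$ depends only on $(n + \tau(i)) \bmod q$, so the whole $\tau$-word $\alpha[n+\tau]$ depends only on $n \bmod q$; together with the at most $t$ words coming from $n < t$, this gives $\#F_\alpha(\tau) \leq q + t$ for \emph{every} $\tau$, whence $p_\alpha^*(k) \leq q + t$ for all $k$. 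Since $2k-1 \to \infty$, any $k$ with $2k-1 \geq q+t$ witnesses $p_\alpha^*(k) \leq 2k-1$.

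For the converse I would argue the contrapositive: if $\alpha$ is aperiodic then $p_\alpha^*(k) \geq 2k$ for all $k$. First, $p_\alpha^*$ is non-decreasing, because dropping the final index of a $(k+1)$-window induces a surjection $F_\alpha(\tau') \to F_\alpha(\tau)$, so $\#F_\alpha(\tau') \geq \#F_\alpha(\tau)$; and $p_\alpha^*(1) = \#\A_\alpha \geq 2$ since an aperiodic sequence is nonconstant. Hence it suffices to prove the increment bound $p_\alpha^*(k+1) \geq p_\alpha^*(k) + 2$ for every $k$, as telescoping then yields $p_\alpha^*(k) \geq 2 + 2(k-1) = 2k$. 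Equivalently, I assume $p_\alpha^*(k+1) \leq p_\alpha^*(k)+1$ for some $k$ and must extract a period for $\alpha$.

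Fix such a $k$ and a maximal $k$-window $\tau$ with $\#F_\alpha(\tau) = p_\alpha^*(k)$. For every position $h > \tau(k-1)$ the immediate extension $\tau \cup \{h\}$ satisfies $p_\alpha^*(k) \leq \#F_\alpha(\tau \cup \{h\}) \leq p_\alpha^*(k)+1$, so at most one $\tau$-word can \emph{split} (admit two distinct values of $\alpha_{n+h}$), while every other word determines $\alpha_{n+h}$ as a function of $\alpha[n+\tau]$. The clean case is when some $h$ produces no split at all: then choosing two indices $n_1 < n_2$ with $\alpha[n_1+\tau] = \alpha[n_2+\tau]$ (which exist since $F_\alpha(\tau)$ is finite) forces $\alpha_{n_1+h} = \alpha_{n_2+h}$ for all $h > \tau(k-1)$, so with $q = n_2 - n_1$ we get $\alpha_m = \alpha_{m+q}$ for all $m > n_1 + \tau(k-1)$, i.e. eventual periodicity. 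The same conclusion follows whenever some infinitely occurring word is the splitting word for only finitely many $h$: comparing two sufficiently late occurrences of that word gives agreement at all large positions, hence a period.

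The main obstacle is the remaining case, in which every far position $h$ splits exactly one word and every infinitely occurring word serves as the splitting word for infinitely many $h$. Here, for any fixed pair of equal-$\tau$-word occurrences, determinism fails at an infinite set of positions, so no period can be read off a single pair. My plan is to upgrade ``agreement off a sparse set'' to genuine eventual agreement by a compactness/Ramsey argument in the spirit of the super-stationary sequence technique of Kamae and Zamboni: color positions by ever-longer windows and use the pigeonhole over the finitely many $\tau$-words along an arbitrarily long increasing family of indices to locate two occurrences of a common pattern that agree on a \emph{cofinite} set of coordinates, which again delivers a period. Making this extraction precise — controlling the gaps between sampled positions so that one obtains eventual agreement rather than merely frequent agreement — is the technical heart of the proof and the step I expect to be the hardest.
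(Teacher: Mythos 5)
You should first know that the paper does not prove this statement at all: it is quoted verbatim from Kamae--Zamboni \cite{Kamae_1}, and the paper only ever uses it (together with \thmref{KZ_incr}) as a black box, e.g.\ in the proof of \thmref{LB}. So the comparison here is really between your proposal and the original Kamae--Zamboni argument. Your easy direction is correct and complete. Your reduction of the hard direction is also the standard one, and in fact it is exactly the reduction the paper itself performs in \thmref{LB}: monotonicity of $p_{\alpha}^*$, the base case $p_{\alpha}^*(1)\geq 2$, and the increment bound $p_{\alpha}^*(k+1)\geq p_{\alpha}^*(k)+2$ for aperiodic $\alpha$. Your counting observation that $\#F_{\alpha}(\tau\cup\{h\})\leq \#F_{\alpha}(\tau)+1$ forces at most one $\tau$-word to split at each $h$ is also right, and your dispatch of the case where some infinitely occurring word splits for only finitely many $h$ is sound (the "clean case" as literally written is a non sequitur -- one non-splitting $h$ gives agreement only at that $h$ -- but your refined version subsumes it).

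The genuine gap is that the remaining case is not a technical loose end: it \emph{is} the theorem. The increment bound you need is precisely \thmref{KZ_incr}, and everything you prove before reaching the hard case (projections are surjective, at most one word splits per extension, finitely many splits give a period) is the elementary part of Kamae--Zamboni's argument. Worse, the plan you sketch for the hard case -- locate two occurrences of a common pattern that agree on a cofinite set of coordinates -- is not a route to the conclusion but a restatement of it: two positions $n_1<n_2$ with $\alpha_{n_1+h}=\alpha_{n_2+h}$ for all large $h$ is, by definition, eventual periodicity with period $n_2-n_1$, and in the aperiodic situation you are trying to contradict, \emph{no} such pair exists, cofinitely or otherwise. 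So pigeonhole/Ramsey extraction alone cannot produce the pair; the complexity hypothesis must enter in an essentially different way. The actual proof in \cite{Kamae_1} does this via recurrence plus a global counting argument -- the key step is the fact quoted in this paper's proof of \thmref{minimal}: choosing the extension position $n$ so that a long prefix containing all $\tau$-words recurs, every word $aw\in F_{\alpha}(\tau)$ extends to $awa$, and one then shows that for aperiodic $\alpha$ this forces $\#F_{\alpha}(\tau')\geq \#F_{\alpha}(\tau)+2$. Your proposal contains no analogue of that step, so as it stands the hard direction is unproved.
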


\begin{theorem}[\cite{Kamae_1}]
    \thmlab{KZ_incr}
    For an arbitrary word $\alpha$ over a finite set $\A$, if there exists $k \in \N$ and a window $\tau$ of size $k$ such that $\#F_{\alpha}(\tau') \leq \#F_{\alpha}(\tau) + 1$ holds for any immediate extension $\tau'$ of $\tau$, then $\alpha$ is eventually periodic.
\end{theorem}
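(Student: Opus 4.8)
The plan is to translate the hypothesis into a statement about how the occurrences of a fixed $\tau$-word branch to the right, and then to show that this branching is so constrained that $\alpha$ produces only finitely many distinct tails $\sigma^n(\alpha)$, which forces eventual periodicity. Write $M = \tau(k-1)$ for the largest index of $\tau$, and for a word $w \in F_\alpha(\tau)$ let $N_w = \{n \in \N_0 : \alpha[n+\tau] = w\}$ be its occurrence set. For an offset $h > M$, set $\mathrm{Fol}_w(h) = \{\alpha_{n+h} : n \in N_w\}$. Since every occurrence of $w$ extends to at least one $(\tau \cup \{h\})$-word, one has $\#F_\alpha(\tau \cup \{h\}) = \sum_{w} \#\mathrm{Fol}_w(h)$ while $\#F_\alpha(\tau) = \sum_w 1$. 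Thus the hypothesis $\#F_\alpha(\tau \cup \{h\}) \le \#F_\alpha(\tau) + 1$ is exactly the condition $\sum_w (\#\mathrm{Fol}_w(h) - 1) \le 1$. As each summand is a nonnegative integer, this says: for every offset $h > M$ there is at most one word $w_h$, the \emph{defect at $h$}, with $\#\mathrm{Fol}_{w_h}(h) = 2$, and every other word has a single, determined follower at offset $h$.

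The reduction step is that $\alpha$ is eventually periodic iff the tail set $\{\sigma^n(\alpha) : n \ge 0\}$ is finite (two equal tails give a period). Since $\sigma^n(\alpha)$ is recovered from its length-$M$ prefix together with $\sigma^{n+M}(\alpha)$, and there are at most $(\#\A)^M$ such prefixes, it suffices to prove that each tail set $\Theta_w = \{\sigma^{n+M}(\alpha) : n \in N_w\}$ is finite (there are only finitely many words $w$). The first paragraph controls $\Theta_w$ coordinatewise: the $j$-th coordinate of a member of $\Theta_w$ is $\alpha_{n+M+j}$, which over $n \in N_w$ takes $\#\mathrm{Fol}_w(M+j)$ values; this is $2$ precisely when $w$ is the defect at offset $M+j$, and $1$ otherwise. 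In particular, if only finitely many offsets $h > M$ are defect offsets for any word, then beyond the largest of them every word is right-deterministic, so each $\Theta_w$ has at most $2^{(\text{number of defect offsets of } w)}$ elements, the tail set is finite, and eventual periodicity follows.

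It remains to rule out the possibility that infinitely many offsets are defects; this is the main obstacle. If that happened, then since there are finitely many $\tau$-words a single word $w^\ast$ would be the defect at infinitely many offsets $g_1 < g_2 < \cdots$, so $\Theta_{w^\ast}$ would be infinite. The difficulty is that repeated branching of one word is not by itself forbidden by the single-window hypothesis — the resulting blow-up in complexity appears only in the non-immediate extensions $\tau \cup \{g_i, g_j\}$, which the hypothesis does not constrain. The plan is therefore to convert a future branching of $w^\ast$ into a branching of a \emph{different} word at a common offset, producing two simultaneous defects and contradicting the reformulation of the first paragraph. Concretely, an infinite $\Theta_{w^\ast}$ whose coordinates each take at most two values behaves like an aperiodic binary sequence, so by the Morse--Hedlund phenomenon underlying \thmref{KZ_HM} it realizes, at two suitable offsets $g_i, g_j$, all four value-combinations; using the recurrence of the finite pattern $w^\ast$ in $\alpha$, one re-reads the occurrences witnessing these combinations as occurrences of two distinct $\tau$-words that both branch at a single offset (after an alignment shift). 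Making this re-reading precise — matching the alignment so that the two independent branchings of $w^\ast$ land on one offset for two distinct words — is the technical heart of the argument, and I expect a Ramsey-type (super-stationary set) extraction on the occurrences of $w^\ast$ to be the cleanest way to secure the required alignment. Once it is in place the hypothesis is violated, so only finitely many defect offsets occur and the reduction of the second paragraph completes the proof.
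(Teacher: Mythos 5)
Your first two paragraphs are correct and cleanly organized: the identity $\#F_\alpha(\tau\cup\{h\}) = \sum_{w\in F_\alpha(\tau)}\#\mathrm{Fol}_w(h)$ does hold (words with distinct prefixes in $F_\alpha(\tau)$ are distinct), so the hypothesis is exactly ``at most one defect word per offset, with exactly two followers,'' and your reduction is valid — in fact it can be streamlined: if no offset beyond some $H$ is a defect offset, then $\alpha_{n+h}$ is determined by $\alpha[n+\tau]$ for all $h>H$, so any two positions $n<n'$ carrying the same $\tau$-word (which exist by pigeonhole) satisfy $\sigma^{n+H+1}(\alpha)=\sigma^{n'+H+1}(\alpha)$, giving eventual periodicity without the tail-set bookkeeping. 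The genuine gap is that all the difficulty has been deferred to the final paragraph, where the proof stops being a proof. You correctly note that a single word $w^\ast$ branching at infinitely many offsets $g_1<g_2<\cdots$ is not directly contradictory, since the blow-up lives in two-point extensions $\tau\cup\{g_i,g_j\}$ that the hypothesis does not bound; but the mechanism you propose to reach a contradiction — realizing all four value-combinations at two offsets and ``re-reading'' them as two distinct $\tau$-words branching at one common offset — is asserted rather than constructed, and you flag it yourself as unresolved. Two concrete obstructions: (a) $\#\mathrm{Fol}_{w^\ast}(g_i)=2$ only guarantees the minority follower occurs at least once, possibly exactly once, so no Morse--Hedlund-type dichotomy applies to $\Theta_{w^\ast}$ before an extraction ensuring both followers occur along infinite subsets of $N_{w^\ast}$; (b) the re-reading step requires controlling the words $\alpha[(n+s)+\tau]$ over $n\in N_{w^\ast}$ for a suitable shift $s$, ensuring the two words obtained are \emph{distinct}, and ensuring their transported branchings land at a single offset exceeding $\tau(k-1)$ — the appeal to a Ramsey-type extraction names a tool but supplies none of these three guarantees, and distinctness in particular has no visible source in your setup.

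For comparison: the paper does not prove \thmref{KZ_incr} at all (it is quoted from \cite{Kamae_1}), but it does record the engine of Kamae and Zamboni's actual argument inside the proof of \thmref{minimal}, and that engine works in the opposite direction from your defect bookkeeping: rather than tracking how branchings accumulate across infinitely many offsets, one produces \emph{two} branchings at a \emph{single} well-chosen offset. Assuming $\alpha$ is not eventually periodic, recurrence (of a suitable point, through a prefix long enough to witness every $\tau$-word) supplies an $n$ with $\alpha_i=\alpha_{i+n}$ along that prefix, so every word $aw\in F_\alpha(\tau)$ extends to $awa$ at offset $n$; the quoted fact is that aperiodicity then forces $\#F_\alpha(\tau\cup\{n\})\ge\#F_\alpha(\tau)+2$, i.e., two defects at the one offset $n$, contradicting your reformulated hypothesis immediately. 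That recurrence-aligned offset is precisely the alignment device your sketch is missing: if you want to complete your route, you would need to prove the $awa$ fact and the existence of such an $n$ for arbitrary (not necessarily recurrent) $\alpha$ — at which point the infinite-defect analysis becomes unnecessary, since the contradiction already appears at one immediate extension.
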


The following fact is an extension of the Morse-Hedlund Theorem for any window $\tau$, which follows from the same proof.

\begin{fact}
    \factlab{k+1}
    For any aperiodic sequence $\alpha$, any $k \in \N$, and any $k$-sized window $\tau$, $\#F_{\alpha}(\tau) \geq k+1$.
\end{fact}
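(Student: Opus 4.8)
The plan is to build the window $\tau$ one index at a time, always adjoining the \emph{largest} index, and to show that for an aperiodic sequence each such immediate extension strictly increases the number of $\tau$-words; the bound then follows by summing the increments. Concretely, write $\tau = \{0 = \tau(0) < \tau(1) < \cdots < \tau(k-1)\}$ and set $\tau^{(j)} := \{\tau(0), \ldots, \tau(j-1)\}$, so that $\tau^{(1)} = \{0\}$, $\tau^{(k)} = \tau$, and each $\tau^{(j+1)}$ is an immediate extension of $\tau^{(j)}$. If I can show $\#F_{\alpha}(\tau^{(j+1)}) \geq \#F_{\alpha}(\tau^{(j)}) + 1$ for every $j$, then $\#F_{\alpha}(\tau) \geq \#F_{\alpha}(\{0\}) + (k-1) = \#\A_{\alpha} + (k-1) \geq k+1$, where the last inequality uses that an aperiodic sequence cannot be constant, so $\#\A_{\alpha} \geq 2$.

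First I would record monotonicity. The coordinate-dropping map $\pi \colon F_{\alpha}(\tau^{(j+1)}) \to F_{\alpha}(\tau^{(j)})$ sending $\alpha[n + \tau^{(j+1)}]$ to $\alpha[n + \tau^{(j)}]$ is well defined and surjective, so $\#F_{\alpha}(\tau^{(j+1)}) \geq \#F_{\alpha}(\tau^{(j)})$. Moreover equality holds precisely when $\pi$ is injective, that is, when the value $\alpha_{n + \tau(j)}$ is determined by the tuple $(\alpha_{n+\tau(0)}, \ldots, \alpha_{n+\tau(j-1)})$ for every $n$: two offsets producing the same $\tau^{(j)}$-word must then also agree in the new largest coordinate.

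The crux is to show this \emph{determination} forces eventual periodicity, contradicting aperiodicity; here the Morse--Hedlund mechanism enters. Because the adjoined index $\tau(j)$ is the largest, every index $\tau(0), \ldots, \tau(j-1)$ lies in $[0, \tau(j)-1]$, so the determination rewrites as $\alpha_{n + \tau(j)} = G(\alpha_n \alpha_{n+1} \cdots \alpha_{n + \tau(j) - 1})$ for a fixed function $G$ of the length-$\tau(j)$ block starting at $n$. Hence the block $s_n := \alpha_n \cdots \alpha_{n + \tau(j) - 1}$ determines $s_{n+1}$, making $n \mapsto s_n$ a deterministic orbit in the finite state space $\A^{\tau(j)}$; such an orbit is eventually periodic, so $\alpha$ is eventually periodic. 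This is exactly the deterministic-follower argument behind the classical Morse--Hedlund theorem, applied to an arbitrary window instead of a contiguous block.

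Thus for aperiodic $\alpha$ strict increase holds at every step of the chain, which completes the bound. The step I expect to require the most care is this last one: passing from ``the largest window-coordinate is a function of the smaller window-coordinates'' to ``each symbol is a function of the preceding block,'' since the window may skip indices and one must verify that all determining positions genuinely fall inside a single block of length $\tau(j)$ so that the finite-state dynamics closes up.
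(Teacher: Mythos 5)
Your proposal is correct and matches the paper's approach: the paper gives no separate argument, stating only that the fact ``follows from the same proof'' as the Morse--Hedlund theorem, and your deterministic-follower argument --- equality of counts under an immediate extension makes $\alpha_{n+\tau(j)}$ a function of the block $\alpha_n\cdots\alpha_{n+\tau(j)-1}$, so the block orbit is a deterministic orbit in a finite state space and $\alpha$ is eventually periodic --- is exactly that proof adapted to arbitrary windows. The incremental count, starting from $\#F_{\alpha}(\{0\}) = \#\A_{\alpha} \geq 2$ and gaining at least one word per added index, correctly yields $\#F_{\alpha}(\tau) \geq k+1$.
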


Using \thmref{KZ_incr}, it is easy to extend \thmref{KZ_HM} to the case when $\A_{\alpha} = \A$ as seen in the following theorem.
\begin{theorem}
    \thmlab{LB}
    If an infinite sequence $\alpha$ is aperiodic and $\#\A_{\alpha} = \ell \geq 2$, then $p_{\alpha}^*(k) \geq 2k + \ell -2$ for all $k \in \N$.
\end{theorem}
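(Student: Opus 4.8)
The plan is to bootstrap from \thmref{KZ_incr} in its contrapositive form, building a single increasing chain of windows whose $\tau$-word counts climb by at least two at each step. Since $\alpha$ is aperiodic it is in particular not eventually periodic, so \thmref{KZ_incr} forbids the existence of any window all of whose immediate extensions increase the count by at most one. Equivalently: for every $k$-sized window $\tau$ there is an immediate extension $\tau'$ with $\#F_{\alpha}(\tau') \geq \#F_{\alpha}(\tau) + 2$. This contrapositive is the engine of the argument.

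First I would fix the base window $\tau_1 = \{0\}$ of size one. Because $\#\A_{\alpha} = \ell$, every one of the $\ell$ letters occurs in $\alpha$, so $F_{\alpha}(\tau_1)$ is precisely the set of letters appearing in $\alpha$, and hence $\#F_{\alpha}(\tau_1) = \ell$. This already settles the case $k=1$, where the desired bound reads $p_{\alpha}^*(1) \geq \ell$.

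Next I would iterate for $k \geq 2$. Given a window $\tau_j$ of size $j$ with $1 \leq j \leq k-1$, apply the contrapositive above to obtain an immediate extension $\tau_{j+1}$ of size $j+1$ satisfying $\#F_{\alpha}(\tau_{j+1}) \geq \#F_{\alpha}(\tau_j) + 2$. Chaining these $k-1$ inequalities from $\tau_1$ up to $\tau_k$ yields
\[
\#F_{\alpha}(\tau_k) \;\geq\; \#F_{\alpha}(\tau_1) + 2(k-1) \;=\; \ell + 2(k-1) \;=\; 2k + \ell - 2.
\]
Since $p_{\alpha}^*(k)$ is the supremum of $\#F_{\alpha}(\tau)$ over all $k$-sized windows, the single window $\tau_k$ already witnesses $p_{\alpha}^*(k) \geq 2k + \ell - 2$, completing the proof.

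As the excerpt anticipates, this extension is ``easy'' and there is no serious obstacle; the argument is a direct induction on the window size. The only point demanding care is the logical form of the contrapositive: \thmref{KZ_incr} guarantees merely that \emph{some} immediate extension increases the count by at least two, not that every extension does, but this existence is exactly what the chain construction consumes at each stage. The step inequalities then accumulate by plain transitivity, so no separate monotonicity estimate is needed.
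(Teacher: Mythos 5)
Your proposal is correct and follows essentially the same route as the paper: both start from the base window $\{0\}$ giving $\ell$ words and then apply the contrapositive of \thmref{KZ_incr} inductively to gain at least two words per extension, yielding $\ell + 2(k-1) = 2k + \ell - 2$. The only cosmetic difference is that the paper re-selects a maximal $k$-sized window at each inductive step while you grow one nested chain of (not necessarily maximal) windows; either way the same bound on $p_{\alpha}^*(k)$ follows.
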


\begin{proof}
    Suppose $\alpha$ is aperiodic and $\#\A_{\alpha} = \ell \geq 2$.  Clearly, since $\#\A_{\alpha} = \ell$, $p_{\alpha}^*(1) = \ell$.

    Suppose for some $k > 1$ we have $p_{\alpha}^*(k) \geq 2k + \ell -2$.

    Let $\tau$ be a maximal $k$-sized window for $\alpha$. Then $\#F_{\alpha}(\tau) \geq 2k + \ell - 2$.  By \thmref{KZ_incr}, since $\alpha$ is aperiodic, there exists some immediate extension $\tau'$ of $\tau$ such that $\#F_{\alpha}(\tau') \geq \#F_{\alpha}(\tau) + 2$.  Thus, 
    $$p_{\alpha}^*(k+1) \geq \#F_{\alpha}(\tau') \geq \#F_{\alpha}(\tau) + 2 \geq 2(k+1) + \ell -2.$$

    Hence for all $k \in \N$, $p_{\alpha}^*(k) \geq 2k + \ell -2$.
\end{proof}

In \cite{Kamae_1}, Kamae and Zamboni defined the maximal pattern complexity analogue for a Sturmian sequence over a two letter alphabet.

\begin{definition}
    A \textbf{pattern Sturmian sequence over a two letter alphabet} is an aperiodic sequence $\alpha$ for which $p_{\alpha}^*(k) = 2k$ for all $k \in \N$.  
\end{definition}

Kamae and Rao introduced the singular decomposition of an infinite sequence in \cite{ell_let}.  Below is a quick outline of this process, but a more in depth exploration can be found in \cite{ell_let}.

In \defref{SD_R}, the singular decomposition of a recurrent sequence is defined and in \defref{SD_NR}, the singular decomposition of a non-recurrent sequence is defined.  A singular decomposition requires a recurrent sequence, so for a non-recurrent sequence $\alpha$, we need to find some recurrent sequence $\beta$ in the orbit closure of $\alpha$ which satisfies some technical conditions outlined in \defref{Aux}, then decompose $\alpha$ with respect to the decomposition of $\beta$.  The following definitions outline the singular decomposition of any sequence $\alpha$.

\begin{definition}
    A sequence $\alpha$ is \textbf{periodic by projection} if there exists some proper, non-empty subset of the alphabet, $S \subset \A$, such that the sequence $1_{S}(\alpha_0)1_{S}(\alpha_1)1_{S}(\alpha_2)...\in \{0, 1\}^{\N_0}$ is eventually periodic.
    
    A letter $b \in \A_{\alpha}$ is \textbf{singular} if the sequence $1_{\{b\}}(\alpha_0)1_{\{b\}}(\alpha_1)1_{\{b\}}(\alpha_2)...\in \{0, 1\}^{\N_0}$ is periodic. 
\end{definition}

\begin{definition}
    Let $\beta$ be some recurrent sequence.  We say $m \in \N$ is the \textbf{decomposition cycle of $\boldsymbol{\beta}$} if $m$ is the smallest integer which is a period of $1_{\{b\}}(\alpha_0)1_{\{b\}}(\alpha_1)1_{\{b\}}(\alpha_2)...$ for each singular letter $b$ of $\beta$.  If $\beta$ has no singular letters, then we set $m=1$.
\end{definition}

\begin{definition}
    \deflab{SD_R}
    The \textbf{singular decomposition of a recurrent sequence} $\boldsymbol{\beta}$ is the set of sequences
    $$\{\beta^{(i)} = \beta_i\beta_{i+m}\beta_{i+2m}...: 0\leq i < m\}$$ 
    where $m$ is the decomposition cycle of $\beta$.
\end{definition}

\begin{definition}
    \deflab{Aux}
    Choose some $r \in \{0, 1, ..., m-1\}$ such that $\lim\limits_{n \to \infty} \sigma^{mk_n+r}(\alpha)$ converges for some sequence $(k_n)_{n\in\N}$.  Let $\beta = \sigma^{m-r}\left(\lim\limits_{n \to \infty} \sigma^{mk_n+r}(\alpha)\right)$.
    $\beta$ is an \textbf{auxiliary word of $\boldsymbol{\alpha}$} if the following conditions are true:
    \begin{enumerate}
        \item $\beta$ is recurrent
        \item $\beta \in \overline{\mathcal{O}(\alpha)}$
        \item $\beta^{(i)} \in \overline{\mathcal{O}(\alpha^{(i)})}$ for all $i \in \N \cap [0, m-1]$
    \end{enumerate}
\end{definition}

In \cite{ell_let}, Kamae and Rao proved that any aperiodic sequence has at least one auxiliary word.

Note that if $\beta$ is an auxiliary word of $\alpha$, then $p_{\alpha}^*(k) \geq p_{\beta}^*(k)$ for all $k \in \N$ since for any $k$-sized window $\tau$, $F_{\beta}(\tau) \subseteq F_{\alpha}(\tau)$.

\begin{definition}
    \deflab{SD_NR}
    The \textbf{singular decomposition of a non-recurrent word $\boldsymbol{\alpha}$ with respect to an auxiliary word $\boldsymbol{\beta}$} is the set of sequences
    $$\{\alpha^{(i)} = \alpha_i\alpha_{i+m}\alpha_{i+2m}...: 0\leq i < m\}$$ 
    for $m$ the decomposition cycle of the auxiliary word $\beta$.
\end{definition}

\begin{definition}
    Define an undirected graph $\Gamma$ on the residues of the singular decomposition of $\alpha$ by $V(\Gamma) = \{0, 1, 2, ..., m-1\}$, the residues of $\alpha$, and $E(\Gamma) = \{(i, j) : \A_{\alpha^{(i)}} \cap \A_{\alpha^{(i)}} \neq \emptyset\}$.
\end{definition}

\begin{definition}
    For any infinite sequence $\alpha$, $k$-sized window $\tau$, and residue $i$, $0 \leq i \leq m-1$ for $m$ the decomposition cycle of $\alpha$, we will define the \textbf{set of $\boldsymbol{\tau}$-words beginning on the $\boldsymbol{i}$th residue} as the set 
    $$C_i(\tau, \alpha) := \{\alpha[n + \tau] : n \equiv i \mod m\}$$
\end{definition}

We note that $\bigcup_{i=0}^{m-1} C_i(\tau, \alpha) = F_{\alpha}(\tau)$.

In \cite{ell_let}, Kamae and Rao also proved the following theorems about the maximal pattern complexity of sequences and found a threshold under which a non-trivial singular decomposition is forced.

\begin{theorem}[\cite{ell_let}]
    \thmlab{Rec_2.1}
    Let $\beta$ be a recurrent sequence over $\ell$ letters, $\ell \geq 2$.  If $p_{\beta}^*(k) < \ell k$ holds for some $k \in \N$, then $\beta$ contains at least one singular letter.
\end{theorem}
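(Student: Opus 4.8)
The plan is to prove the contrapositive by induction on the alphabet size $\ell$: if $\beta$ is recurrent and no letter of $\beta$ is singular, then $p_{\beta}^*(k) \geq \ell k$ for every $k \in \N$. For the base case $\ell = 2$, note that over two letters the two single-letter indicator sequences are complementary, so neither is periodic exactly when $\beta$ itself is not periodic. A recurrent sequence that is not periodic cannot be eventually periodic, hence is aperiodic, and \thmref{KZ_HM} then forces $p_{\beta}^*(k) \geq 2k$ for all $k$, which is the claim for $\ell = 2$.

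For the inductive step, assume the statement for alphabets of size $\ell - 1 \geq 2$ and let $\beta$ be recurrent over $\ell$ letters with no singular letter. First I would produce a pair of letters that can be merged without creating a singular letter. Writing $1_{S}(\beta)$ for the indicator sequence of a set $S \subseteq \A_{\beta}$, I claim there exist distinct $a, c$ with $1_{\{a,c\}}(\beta)$ not periodic. Indeed, if every pairwise merge were periodic, then fixing any three distinct letters $a, c, d$ (which exist since $\ell \geq 3$) the pointwise identity
\[
1_{\{a,c\}}(\beta) + 1_{\{a,d\}}(\beta) - 1_{\{c,d\}}(\beta) = 2 \cdot 1_{\{a\}}(\beta)
\]
would exhibit $1_{\{a\}}(\beta)$ as an integer combination of periodic sequences, hence periodic, contradicting that $a$ is not singular. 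Fix such $a, c$ and let $\beta'$ be the coordinatewise factor of $\beta$ that merges $a$ and $c$ into a single symbol. Then $\beta'$ is recurrent (a factor of a recurrent sequence), is over $\ell - 1$ letters, and has no singular letter: the merged symbol has the non-periodic indicator $1_{\{a,c\}}(\beta)$ by construction, while every other letter $e$ satisfies $1_{\{e\}}(\beta') = 1_{\{e\}}(\beta)$, which is non-periodic. The induction hypothesis therefore gives $p_{\beta'}^*(k) \geq (\ell - 1)k$.

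It remains to recover the missing $k$ from the $a$-versus-$c$ distinction that $\beta'$ forgot. The binary indicator $\pi_a(\beta) := 1_{\{a\}}(\beta)$ is recurrent and non-periodic, hence aperiodic, so by \thmref{KZ_incr} every window admits an immediate extension on which its word count strictly grows. The key combinatorial step is a super-additivity estimate: for a suitably chosen $k$-sized window $\tau$ one has
\[
\#F_{\beta}(\tau) \geq \#F_{\beta'}(\tau) + k,
\]
the extra $k$ coming from the refinement of merged $\tau$-words into their $a/c$ realizations. Combined with $\#F_{\beta'}(\tau) \geq (\ell-1)k$ this yields $p_{\beta}^*(k) \geq \ell k$, completing the induction.

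I expect this last inequality to be the main obstacle. The map sending a $\tau$-word of $\beta$ to the pair consisting of its merged word over $\beta'$ and its $a$-indicator pattern is injective, which gives the trivial bounds $\#F_{\beta}(\tau) \geq \max\{\#F_{\beta'}(\tau), \#F_{\pi_a\beta}(\tau)\}$ but not their sum; the difficulty is to show that the $a/c$ refinement contributes at least $k$ genuinely new words beyond those already separated by $\beta'$. I would attack this by building $\tau$ incrementally and applying \thmref{KZ_incr} to the binary sequence recording $a$ against $c$ on the merged positions, using recurrence of $\beta$ to guarantee that each extension realizing a new $a/c$ pattern does so over a merged word that already occurs, so that the gains from $\beta'$ and from the $a/c$ split accumulate additively rather than overlap.
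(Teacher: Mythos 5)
The paper never proves this statement---it is quoted from \cite{ell_let}---so your argument has to stand entirely on its own, and it does not yet: the step you yourself flag as ``the main obstacle'' is a genuine gap, and it is exactly where the content of the theorem lives. The parts you do carry out are correct and even elegant. The base case is fine ($\ell=2$: no singular letter forces $\beta$ non-periodic, recurrence upgrades non-periodic to aperiodic, and \thmref{KZ_HM} gives $2k$); the identity $1_{\{a,c\}}(\beta)+1_{\{a,d\}}(\beta)-1_{\{c,d\}}(\beta)=2\cdot 1_{\{a\}}(\beta)$ does produce a pair $a,c$ whose merge is non-periodic; and the merged sequence $\beta'$ is indeed recurrent over $\ell-1$ letters with no singular letter, so the induction hypothesis applies to it. The problem is the inequality $\#F_{\beta}(\tau) \geq \#F_{\beta'}(\tau) + k$, which you need for a window $\tau$ that \emph{simultaneously} satisfies $\#F_{\beta'}(\tau)\geq(\ell-1)k$. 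Your induction hypothesis is a black-box existence statement, one window per size $k$, with no control over \emph{which} windows achieve $(\ell-1)k$ and no guarantee that they form a chain or can be grown position by position. Your proposed repair---building $\tau$ incrementally and applying \thmref{KZ_incr} to the $a$-versus-$c$ sequence---is an induction on $k$, and it cannot be interleaved with your outer induction on $\ell$: at each incremental step you would need an extension gaining $\ell-1$ new words for $\beta'$ from the \emph{current} window, which is not what the induction hypothesis provides (and is false for arbitrary windows; KZ-type extension arguments only guarantee a gain of $2$).

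The difficulty is not cosmetic, because the $a/c$ refinement can carry a parity-type cocycle structure invisible to both factors separately. For instance, if the merged symbols $*$ of a Sturmian $\beta'$ are realized alternately as $a$ and $c$ (alternation by occurrence count), then every merged $\tau$-word has at most two $\beta$-realizations, distinguished only by the parity of the number of $*$'s preceding the occurrence; extracting $k$ excess words then amounts to showing that many merged words occur at starting positions of both parities, which is a recurrence statement about a skew product, not something that follows from \thmref{KZ_incr} applied to $\beta'$ or to the $a$-indicator. It is telling that the one common-window result in this paper, \thmref{minimal}, requires \emph{uniform} recurrence (Poincar\'e recurrence on the product system) and even then only coordinates two sequences to $2k$ words each rather than producing additive gains---while your $\beta$ is merely recurrent. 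In short, your reduction to the superadditivity claim is clean, but that claim is essentially as strong as the theorem itself, and the sketch offered for it does not close.
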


\begin{theorem}[\cite{ell_let}]
    \thmlab{NotRec_2.1}
    If $\ell \geq 2$ and $p_{\alpha}^*(k)< \ell k$ holds for some $k \in \N$, then $\Gamma$ is disconnected.
\end{theorem}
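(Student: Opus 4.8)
The plan is to prove the contrapositive: if $\Gamma$ is connected, then $p_{\alpha}^*(k) \geq \ell k$ for every $k \in \N$, which rules out $p_{\alpha}^*(k) < \ell k$ for any $k$. The starting point is a decomposition identity for windows of the special form $m\tau'$, where $\tau'$ is an arbitrary $k$-sized window and $m$ is the decomposition cycle. Writing $n = i + mj$ gives $\alpha[n + m\tau'] = \alpha^{(i)}[j + \tau']$, so $C_i(m\tau', \alpha) = F_{\alpha^{(i)}}(\tau')$ and hence $F_{\alpha}(m\tau') = \bigcup_{i=0}^{m-1} F_{\alpha^{(i)}}(\tau')$. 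Since every word in $F_{\alpha^{(i)}}(\tau')$ is spelled only from letters of $\A_{\alpha^{(i)}}$, two residues $i,j$ can contribute a common word only when $\A_{\alpha^{(i)}} \cap \A_{\alpha^{(j)}} \neq \emptyset$, i.e.\ only along an edge of $\Gamma$. Thus $m\tau'$-windows already reduce the theorem to lower-bounding a union of residue word-sets whose overlaps are governed precisely by the edge structure of $\Gamma$, and it suffices to produce, for each $k$, a single $\tau'$ for which this union has at least $\ell k$ elements.

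To build such a window I would order the residues $i_1, \dots, i_m$ along a spanning tree of the connected graph $\Gamma$, so that each $i_t$ with $t \geq 2$ shares at least one letter with an earlier residue, and then add the sets $F_{\alpha^{(i_t)}}(\tau')$ one at a time while tracking how many new words each contributes. A word of $F_{\alpha^{(i_t)}}(\tau')$ can fail to be new only if all of its letters already appear in $\bigcup_{r<t}\A_{\alpha^{(i_r)}}$, so the charging is driven by the letters that residue $i_t$ introduces for the first time. The standing assumption that every letter recurs infinitely forces the coordinate-$0$ projection of $\bigcup_i F_{\alpha^{(i)}}(\tau')$ to equal all of $\A_{\alpha}$, and a Morse--Hedlund-type count within each residue (\factref{k+1}) should convert each newly introduced letter into on the order of $k$ genuinely new $\tau'$-words; summed over the spanning-tree order, the $\ell$ letters of $\A_{\alpha}$ then account for $\ell k$ distinct words. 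The non-recurrent case is handled through the auxiliary word $\beta$: since $\beta$ is recurrent and $p_{\beta}^*(k) \leq p_{\alpha}^*(k)$, \thmref{Rec_2.1} supplies the singular letters that make the decomposition nontrivial, and the union bound is run on the residues compatibly with $\beta^{(i)} \in \overline{\Ob(\alpha^{(i)})}$.

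The step I expect to be the main obstacle is the quantitative heart of the second paragraph: choosing a single window $\tau'$ that is simultaneously rich enough for all residues. Using only \factref{k+1} gives each aperiodic residue $\geq k+1$ words, which after subtracting overlaps is too weak to reach $\ell k$ — for instance two residues that are Sturmian over $\{a,b\}$ and over $\{b,c\}$ yield only $\#\bigcup \geq 2(k+1)-1 = 2k+1 < 3k$ from that bound, yet need nearly $2k$ words each on a common window to produce the required $3k$. Making the charging succeed therefore demands a careful construction of $\tau'$, most naturally by repeatedly applying \thmref{KZ_incr}-style immediate extensions to a window already near-maximal for the full sequence, together with a bound on the overlap words, which are exactly the words spelled only from letters shared between residues. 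Controlling these two quantities at once, and verifying that periodic or eventually constant residues do not sabotage the count, is where the real work lies.
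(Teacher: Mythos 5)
The paper itself offers no proof of \thmref{NotRec_2.1}; it is quoted from \cite{ell_let}, so your attempt can only be judged on its own merits --- and, as you yourself concede, it is a plan with the decisive step missing rather than a proof. The framing (contrapositive, plus the identity $C_i(m\tau',\alpha)=F_{\alpha^{(i)}}(\tau')$ and hence $F_{\alpha}(m\tau')=\bigcup_{i=0}^{m-1}F_{\alpha^{(i)}}(\tau')$) is correct, but the quantitative core --- producing, for every $k$, one window $\tau'$ on which this union has at least $\ell k$ elements whenever $\Gamma$ is connected --- is essentially the entire content of the theorem, and nothing you invoke can supply it. \factref{k+1} gives only $k+1$ words per aperiodic residue; \thmref{minimal} requires uniform recurrence, treats only two sequences, yields only $2k$ words each, and controls no overlaps; and the assertion that each newly introduced letter produces on the order of $k$ new words on a \emph{common} window is not automatic: the standard way to get $k$ distinct words containing a new letter $c$ is to index them by the position of the first occurrence of $c$, and arranging that for all $k$ positions requires structural information about where $c$ sits (as in the paper's own use of such counts in the proof of \thmref{A_size}), not merely that $c$ recurs infinitely often. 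The plan also degenerates exactly where the theorem is strongest: if there are no singular letters then $m=1$, $\Gamma$ is a single vertex and hence connected, the union identity and spanning tree say nothing, and the contrapositive becomes the full-strength claim $p^*_{\alpha}(k)\ge \ell k$. For recurrent $\alpha$ this is rescued by quoting \thmref{Rec_2.1}; for non-recurrent $\alpha$ it is not, because your appeal to \thmref{Rec_2.1} through the auxiliary word needs $p^*_{\beta}(k)<(\#\A_{\beta})k$, which does not follow from $p^*_{\alpha}(k)<\ell k$ when $\A_{\beta}$ is a proper subset of $\A_{\alpha}$.

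You also miss the structural observation that trivializes half the problem and localizes the other half. The decomposition cycle $m$ is, by definition, a period of the indicator sequence of every singular letter $b$, so each residue either equals $b^{\infty}$ or contains no $b$ at all. The $b^{\infty}$ residues therefore have alphabet $\{b\}$ and form a connected component by themselves, so the mere existence of one singular letter (together with $\ell\ge 2$ and the standing assumption that every letter appears) already disconnects $\Gamma$: in the recurrent case, \thmref{NotRec_2.1} follows from \thmref{Rec_2.1} with no counting whatsoever. The genuine difficulty is the non-recurrent case, where the singular-letter residues of $\beta$ are constant but the corresponding residues of $\alpha$ need not be --- $\alpha^{(i)}$ may contain letters invisible in $\beta^{(i)}$ --- and one must use the hypothesis $p^*_{\alpha}(k)<\ell k$ both to rule out such letters reconnecting the graph and to handle the possibility, noted above, that $\beta$ has no singular letters at all. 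Your proposal devotes a single sentence to this case; a correct proof spends essentially all of its effort there.
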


\begin{definition}[\cite{ell_let}]
    A sequence $\alpha$ over $\ell \geq 2$ letters is called a \textbf{pattern Sturmian sequence} if $p_{\alpha}^*(k) = \ell k$ and $\alpha$ is not periodic by projection.
\end{definition}

We define the following slightly more restrictive condition:

\begin{definition}
    A sequence $\alpha$ over $\ell \geq 2$ letters is called a \textbf{strong pattern Sturmian sequence} if $p_{\alpha}^*(k) = 2k + \ell -2$.
\end{definition}

Clearly, if $\ell = 2$, the notions of pattern Sturmian and strong pattern Sturmian are the same.

In general, two sequences may have very different maximal windows. 
 However, when both sequences are uniformally recurrent, the following theorem shows we can find a window which gives at least $2k$ words for each sequence.  This is a shared maximal window when both sequences are pattern Sturmian.  We prove this with an argument from the proof of \thmref{KZ_incr}, which can be found in \cite{Kamae_1}.
\begin{theorem}
    \thmlab{minimal}
    For any two aperiodic sequences $\alpha$ and $\beta$ which are uniformly recurrent and for each $k \in \N$, there exists some $k$-sized window $\tau$ such that $\#F_{\alpha}(\tau) \geq 2k$ and $\#F_{\beta}(\tau) \geq 2k$.
\end{theorem}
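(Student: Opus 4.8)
The plan is to build the window by induction on its size, maintaining at stage $j$ a $j$-sized window $\tau_j$ with $\#F_{\alpha}(\tau_j) \geq 2j$ and $\#F_{\beta}(\tau_j) \geq 2j$ simultaneously. For the base case $j=1$ take $\tau_1 = \{0\}$: since an aperiodic sequence cannot be constant, $\#\A_{\alpha} \geq 2$ and $\#\A_{\beta} \geq 2$, so $\#F_{\alpha}(\{0\}) = \#\A_{\alpha} \geq 2$ and likewise for $\beta$. The key structural observation is that immediate extensions are monotone for both sequences: forgetting the last coordinate maps $F_{\alpha}(\tau')$ onto $F_{\alpha}(\tau)$, so $\#F_{\alpha}(\tau') \geq \#F_{\alpha}(\tau)$ (and similarly for $\beta$) no matter which index is appended. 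Thus extending a good window never hurts either count; the entire difficulty is arranging growth.

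I would next reduce the inductive step to a single \emph{simultaneous increment lemma}: for any window $\tau$ there is an immediate extension $\tau' = \tau \cup \{h\}$ with $\#F_{\alpha}(\tau') \geq \#F_{\alpha}(\tau) + 2$ \emph{and} $\#F_{\beta}(\tau') \geq \#F_{\beta}(\tau) + 2$. It is worth flagging why one really needs $+2$ for \emph{both} at every step rather than something weaker. In the extremal case $\#\A_{\alpha} = \#\A_{\beta} = 2$ the base count is only $2$, so to reach $2k$ after the $k-1$ extensions in the chain $\tau_1 \subset \tau_2 \subset \cdots \subset \tau_k$ the increments must total at least $2(k-1)$ for each sequence; with exactly $k-1$ steps this forces a gain of $2$ per step for $\alpha$ and, independently, for $\beta$. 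Granting the lemma, each step preserves the invariant and the chain terminates at the desired $\tau_k$, proving the theorem.

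To prove the lemma I would adapt the argument behind \thmref{KZ_incr}. For a single aperiodic sequence that theorem already yields, by contraposition, \emph{some} immediate extension of $\tau$ gaining at least $2$; the extra branching responsible for this gain occurs at some position, and because $\alpha$ is uniformly recurrent that branching configuration recurs with bounded gaps. The plan is to use this to show that the set of \emph{good offsets} $G_{\alpha}(\tau) = \{h : \#F_{\alpha}(\tau \cup \{h\}) \geq \#F_{\alpha}(\tau) + 2\}$ is syndetic, and moreover that a good offset is available whenever $\sigma^{h}(\alpha)$ agrees with $\alpha$ on a long enough prefix (a near-return time), since along such $h$ the boundary positions where the appended coordinate flips a symbol are themselves visited by uniform recurrence, producing the required double branching. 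The same holds for $\beta$ with $G_{\beta}(\tau)$. To locate a \emph{common} good offset I would pass to the product shift on $\overline{\Ob(\alpha)} \times \overline{\Ob(\beta)}$ and invoke simultaneous recurrence of the point $(\alpha,\beta)$, choosing $h$ for which $\sigma^{h}(\alpha)$ and $\sigma^{h}(\beta)$ are both close to $\alpha$ and $\beta$ respectively; such $h$ lies in $G_{\alpha}(\tau) \cap G_{\beta}(\tau)$.

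The main obstacle is precisely this last synchronization: each good-offset set is individually abundant, but two syndetic sets can be disjoint, so producing a single index serving both sequences at once is the delicate point. My plan is to resolve it by exploiting uniform recurrence to make each $G$-set robust (syndetic, built from return offsets) and then extracting a joint near-return offset from recurrence in the product system, after which \factref{k+1} and the monotonicity observation guarantee the counts never slip below the invariant. I expect the bookkeeping translating ``long common return'' into ``$+2$ branching for each sequence on the same appended index'' to be the step requiring the most care, and it is where the uniform recurrence hypothesis is genuinely used.
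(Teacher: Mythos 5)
Your overall architecture --- induction on window size, a Kamae--Zamboni ``$+2$'' increment (\thmref{KZ_incr}) obtained from return times, and a passage to the product system $\overline{\Ob(\alpha)} \times \overline{\Ob(\beta)}$ --- matches the paper's, but the step you yourself flag as delicate contains a genuine error: you invoke ``simultaneous recurrence of the point $(\alpha,\beta)$'' under $\sigma \times \sigma$. Uniform recurrence of each coordinate does \emph{not} imply that the pair $(\alpha,\beta)$ is recurrent in the product. Concretely, let $\theta$ be irrational and code the orbit of $0$ under rotation by $\theta$ in two ways: $\alpha_n = 1_{[0,\theta)}(\{n\theta\})$ and $\beta_n = 1_{(0,\theta]}(\{n\theta\})$. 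Both are Sturmian of slope $\theta$, hence aperiodic and uniformly recurrent. But $(\alpha_0,\beta_0) = (1,0)$, while for every $n \geq 1$ the point $\{n\theta\}$ lies in $(0,\theta)$, equals $\theta$, or lies in $(\theta,1)$, so that $(\alpha_n,\beta_n) \in \{(1,1),(0,1),(0,0)\}$; the symbol pair $(1,0)$ never recurs, and $(\alpha,\beta)$ fails to be recurrent already for windows of size $1$. Hence a common near-return offset $h$ need not exist at all, and your simultaneous increment lemma cannot be established by the mechanism you propose: the two syndetic sets of good offsets really can be disjoint, exactly the failure mode you worried about.

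The paper circumvents this obstacle differently: it applies the Poincar\'e Recurrence Theorem to the product system to obtain \emph{some} recurrent pair $(\gamma,\eta) \in \overline{\Ob(\alpha)} \times \overline{\Ob(\beta)}$, not necessarily $(\alpha,\beta)$ itself. Uniform recurrence of $\alpha$ and $\beta$ is used only to guarantee that $\gamma$ and $\eta$ are aperiodic (their orbit closures are minimal and infinite), the induction with the $+2$ increment is run entirely on $(\gamma,\eta)$, where joint return times exist by the choice of that pair, and the conclusion transfers back at the end because $F_{\gamma}(\tau) \subseteq F_{\alpha}(\tau)$ and $F_{\eta}(\tau) \subseteq F_{\beta}(\tau)$ for every window $\tau$. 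Note that this also dictates where the inductive invariant must live: since the inclusions point the wrong way for maintaining lower bounds on $\#F_{\alpha}$ and $\#F_{\beta}$ directly, the invariant must be carried on $(\gamma,\eta)$ throughout, so your plan cannot be repaired merely by inserting the recurrent pair at the final step --- the entire induction has to be relocated to it.
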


\begin{proof}
    Consider the product space $\overline{\Ob(\alpha)} \times\overline{\Ob(\beta)}$ with the map $\sigma \times \sigma$.  $\overline{\Ob(\alpha)} \times\overline{\Ob(\beta)}$ has a recurrent point (by a simple application of the Poincar\'e Recurrence Theorem), call it $(\gamma, \eta)$.

    Since $\alpha$ and $\beta$ are uniformly recurrent, $\gamma$ and $\eta$ are both aperiodic.

    Clearly, $\gamma$ and $\eta$ share a window of size one, and since they are both aperiodic, this window gives at least two words of length one for each sequence.

    Now suppose for some $k>1$ there exists a $(k-1)$-sized window $\tau$ such that $F_{\gamma}(\tau) \geq 2(k-1)$ and $F_{\eta}(\tau) \geq 2(k-1)$.

    Let $L \in \N$ such that $\gamma_0\gamma_1...\gamma_{L-1}$ and $\eta_0\eta_1...\eta_{L-1}$ witness all $\tau$ words over $\gamma$ and $\eta$ respectively.  Since $(\gamma, \eta)$ is recurrent, there exists $n \in \N$ such that for all $0 \leq i < L$, $(\gamma, \eta)_i = (\gamma, \eta)_{n + i}$.  Therefore, for all $0 \leq i < L$, $\gamma_i = \gamma_{i+ n}$ and $\eta_i = \eta_{i + n}$.  

    Define $\tau'$ to be an immediate extension of $\tau$ by $\tau'(i) = \tau(i)$ for all $0 \leq i \leq k-2$ and $\tau'(k-1) = n$.

    In the proof of \thmref{KZ_incr} from \cite{Kamae_1}, the following fact was proved: if $\delta$ is aperiodic and there is some $\tau' = \tau \cup \{n\}$ for some $n$ such that $awa \in \#F_{\delta}(\tau')$ for all $aw \in \#F_{\delta}(\tau)$, $a$ a single letter and $w$ a $k-2$-sized $\tau\setminus \{0\}$ word, then $\#F_{\delta}(\tau') \geq \#F_{\delta}(\tau) + 2$.

    Thus, for $\delta \in \{\gamma, \eta\}$, $\#F_{\delta}(\tau') \geq \#F_{\delta}(\tau) + 2$, and by the inductive hypothesis, $\#F_{\alpha}(\tau') \geq 2k$ and $\#F_{\beta}(\tau') \geq 2k$.
\end{proof}

\section{Recurrent Case}
The purpose of this section is to prove \thmref{main} in the case where $\alpha$ is recurrent.

For the entirety of this section, we will assume $\alpha$ is aperiodic, recurrent, $\#\A_{\alpha} = \ell \geq 3$, and $\liminf\limits\limits_{k \to \infty} p_{\alpha}^*(k) - 3k = -\infty$ (or in the case where $\alpha$ is uniformly recurrent, that $\liminf\limits\limits_{k \to \infty} p_{\alpha}^*(k) - 4k = -\infty$). 

By \thmref{Rec_2.1}, since $p_{\alpha}^*(k) < 3k \leq \ell k$ for some $k\in \N$, $\alpha$ has a singular letter.  Let $m$ be the decomposition cycle of $\alpha$, $m \geq 2$ since the decomposition is non-trivial by the existence of a singular letter.    

\begin{theorem}
    \thmlab{gen_lemR}
    If $\alpha$ is  aperiodic, recurrent, and $\liminf\limits\limits_{k \to \infty} p_{\alpha}^*(k) - 3k = -\infty$, then exactly one residue in the singular decomposition of $\alpha$ is aperiodic.  In the case where $\alpha$ is uniformly recurrent, the implication holds for $4k$ instead of $3k$.
\end{theorem}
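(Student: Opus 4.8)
The plan is to prove the two halves of ``exactly one'' separately, the hard half by showing that two aperiodic residues force $p_{\alpha}^*(k)$ to stay within a bounded distance of $3k$ (resp.\ $4k$), which contradicts $\liminf_{k\to\infty} p_{\alpha}^*(k) - 3k = -\infty$. The backbone is the identity $F_{\alpha}(m\rho) = \bigcup_{i=0}^{m-1} F_{\alpha^{(i)}}(\rho)$, valid for every window $\rho$, obtained by unwinding $C_i(m\rho,\alpha) = F_{\alpha^{(i)}}(\rho)$; since $m\rho$ is itself a $k$-sized window this hands us $p_{\alpha}^*(k) \ge \#\bigcup_i F_{\alpha^{(i)}}(\rho)$ for free, and I will augment $m\rho$ by a bounded number of extra probe indices when the residues need to be separated. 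Existence is immediate: if every residue were eventually periodic then $\alpha$ would be eventually periodic, and an eventually periodic recurrent sequence is periodic, contradicting aperiodicity; hence at least one residue is aperiodic.

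For uniqueness, suppose residues $\alpha^{(i)}, \alpha^{(j)}$ with $i\ne j$ are both aperiodic. I would first secure a single $k$-sized window $\rho$ that is good for both. In the uniformly recurrent case this is exactly \thmref{minimal} applied to $\alpha^{(i)}$ and $\alpha^{(j)}$ (arithmetic subsequences of a uniformly recurrent sequence are again uniformly recurrent, a point I would verify), yielding $\#F_{\alpha^{(i)}}(\rho) \ge 2k$ and $\#F_{\alpha^{(j)}}(\rho) \ge 2k$. In the merely recurrent case I would instead take $\rho$ to be a maximal $k$-sized window for $\alpha^{(i)}$, so $\#F_{\alpha^{(i)}}(\rho) \ge 2k$ by \thmref{KZ_HM}, while \factref{k+1} still supplies $\#F_{\alpha^{(j)}}(\rho) \ge k+1$. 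This $2k$-versus-$(k+1)$ gap is precisely what produces the $4k$-versus-$3k$ dichotomy in the statement.

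The remaining task is to make these two contributions to $F_{\alpha}$ add rather than overlap. By \thmref{NotRec_2.1} the graph $\Gamma$ is disconnected, so there is a clean case: if $\alpha^{(i)}$ and $\alpha^{(j)}$ lie in different components, their alphabets are disjoint, $F_{\alpha^{(i)}}(\rho)$ and $F_{\alpha^{(j)}}(\rho)$ are disjoint subsets of $\A^{k}$, and the window $m\rho$ already gives $p_{\alpha}^*(k) \ge 2k + (k+1) > 3k$ (resp.\ $4k$), contradicting the hypothesis. When the two residues share letters I would adjoin to $m\rho$ a bounded block of indices reading the indicator pattern of a singular letter $b$ whose period $q_b$ satisfies $i \not\equiv j \pmod{q_b}$; such a $b$ exists because $m$ is the least common period of the singular letters and $i\not\equiv j \pmod m$, so $i\equiv j$ modulo every $q_b$ is impossible. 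These markers tag the starting residue class, forcing $C_i(\tau,\alpha)$ and $C_j(\tau,\alpha)$ to be disjoint while each still surjects onto the corresponding $F_{\alpha^{(\cdot)}}(\rho)$. Since only $O(m)=O(1)$ indices are added, I obtain $p_{\alpha}^*(k+O(1)) \ge \#F_{\alpha^{(i)}}(\rho) + \#F_{\alpha^{(j)}}(\rho)$, hence $p_{\alpha}^*(k) \ge 3k - C$ (resp.\ $4k - C$) for a constant $C$, so $p_{\alpha}^*(k)-3k$ is bounded below, contradicting $\liminf_{k\to\infty} p_{\alpha}^*(k) - 3k = -\infty$.

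I expect the marker construction in the shared-letter case to be the main obstacle. The cross-component case is clean, but when the alphabets coincide one must check two things carefully: that the extra probe indices genuinely separate the residue classes (this is exactly where minimality of the decomposition cycle $m$ is used), and, more delicately, that appending them does not collapse previously distinct $\alpha^{(i)}$-words, i.e.\ that the projection of $C_i(\tau,\alpha)$ onto $F_{\alpha^{(i)}}(\rho)$ stays surjective. Concretely this is the phenomenon that threading a pattern Sturmian word split across two residues raises the complexity to roughly $3k$ rather than $2k$, and pinning down that the overhead is a genuine additive constant (independent of $k$) is the crux of the argument.
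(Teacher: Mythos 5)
Your proposal is correct and takes essentially the same route as the paper: uniqueness by contradiction, with a maximal window giving $\geq 2k$ words for one aperiodic residue (\thmref{KZ_HM}), \factref{k+1} giving $\geq k+1$ for the second (or \thmref{minimal} giving $2k$ for both in the uniformly recurrent case, after checking residues inherit uniform recurrence), and a window $m\rho$ augmented by boundedly many indices whose readings of the periodic singular-letter positions force $C_i$ and $C_j$ to be disjoint, yielding $p_{\alpha}^*(k) \geq 3k - O(1)$ (resp.\ $4k - O(1)$) against the liminf hypothesis. The only difference is cosmetic: the paper's extra indices hit every residue class mod $m$ and it invokes the least period $m$ of the full singular-letter skeleton, whereas you isolate a single singular letter $b$ with $i \not\equiv j \pmod{q_b}$; both implement the same separation mechanism.
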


\begin{proof}
    Suppose $\alpha$ is aperiodic, recurrent, and $\liminf\limits\limits_{k \to \infty} p_{\alpha}^*(k) - 3k = -\infty$.  Let $m$ be the decomposition cycle of $\alpha$, $m\geq 2$.
    
    Since $\alpha$ is aperiodic, there exists some $p$, $0 \leq p \leq m-1$, such that $\alpha^{(p)}$ is aperiodic. Let $k > m$ and let $\tau$ be a maximal $(k-(m-1))$-sized window over $\alpha^{(p)}$.  Then $\#F_{\alpha^{(p)}}(\tau) \geq 2k - 2(m-1)$ by \thmref{KZ_HM}.

    Define a $k$-sized window $\tau'$ over $\alpha$ by $\tau' = m\tau \cup \{h_i : 1 \leq i \leq m-1\}$ for $h_1 > m\tau(k-(m-1) -1)$, $h_{i+1} > h_i$ for all $1 \leq i \leq m-2$, and $h_i \equiv i \mod m$ for all $1 \leq i \leq m-1$.

    Define $x \in (\A\cup \{?\})^{\N}$ by $x_i = \alpha_i$ if $\alpha_i$ is a singular letter and $x_i = \,?$ if not.  Then $x$ has least period $m$ since a smaller least period for $x$ would result in the singular letters of $\alpha$ sharing a period smaller than $m$.

    Therefore, $C_i(\tau', \alpha)\, \cap \, C_j(\tau', \alpha) = \emptyset$ for all $0 \leq i < j \leq m-1$ since the position of the singular letters uniquely determines to which set $C_i(\tau', \alpha)$ a word belongs. Furthermore, $\#C_p(\tau', \alpha) \geq 2k-2(m-1)$ since every word in $F_{\alpha^{(p)}}(\tau)$ must appear as a prefix for some word in $C_p(\tau', \alpha)$.
    
    Suppose towards a contradiction that there exists an $i$, $0 \leq i \leq m-1$ and $i \neq p$, such that $\alpha^{(i)}$ is also aperiodic.  Consider $\tau$ over $\alpha^{(i)}$.  Since $\alpha^{(i)}$ is aperiodic, then by \factref{k+1}, $\#F_{\alpha^{(i)}}(\tau) \geq k - (m-1) +1$.  Thus, $\#C_i(\tau', \alpha) \geq k-(m-1)+1$, and since $C_p(\tau', \alpha)$ and $C_i(\tau', \alpha)$ are disjoint, $\#F_{\alpha}(\tau') \geq  3k-3(m-1) + 1$.  Therefore, 
    $$\liminf\limits_{k \to \infty} p_{\alpha}^*(k) - 3k \geq \liminf\limits_{k \to \infty} 3k-3(m-1)+1 - 3k \geq -3(m-1) + 1.$$   
    Since $m$ is fixed, this is a contradiction.  

    In the case where $\alpha$ is uniformly recurrent, any aperiodic residue must also be uniformly recurrent.  This is because the appearance of singular letters in any block of $\alpha$ forces the points where the block begins again to fall on exactly the same residue as the original block.  Thus, blocks in each residue must also appear syndetically, as they directly correspond to the uniform recurrence of the original sequence.  Therefore, if $\alpha^{(p)}$ and $\alpha^{(i)}$ are both aperiodic, then they are both uniformly recurrent, and by \thmref{minimal}, there exists a $(k-(m-1))$-sized window $\tau$ such that $\#F_{\alpha^{(p)}}(\tau) \geq 2k - 2(m-1)$ and $\#F_{\alpha^{(i)}}(\tau) \geq 2k - 2(m-1)$.  Define a $k$-sized window $\tau'$ over $\alpha$ by $\tau' = m\tau \cup \{h_i : 1 \leq i \leq m-1\}$ for $h_1 > m\tau(k-(m-1) -1)$, $h_{i+1} > h_i$ for all $1 \leq i \leq m-2$, and $h_i \equiv i \mod m$ for all $1 \leq i \leq m-1$.  Then $C_p(\tau', \alpha) \cap C_i(\tau', \alpha) = \emptyset$, $\#C_p(\tau', \alpha) \geq 2k - 2(m-1)$, and $\#C_i(\tau', \alpha) \geq 2k-2(m-1)$.  Therefore,
    $$\liminf\limits_{k \to \infty} p_{\alpha}^*(k) - 4k \geq \liminf\limits_{k \to \infty} 4k-4(m-1) - 4k \geq -4(m-1),$$ which is a contradiction since $m$ is fixed.
    
    Thus, exactly one residue of the singular decomposition of $\alpha$ is aperiodic.
\end{proof}

\begin{proposition}
    \proplab{rec_b}
    If $\alpha$ is  aperiodic, recurrent, and $\liminf\limits\limits_{k \to \infty} p_{\alpha}^*(k) - 3k = -\infty$, then the aperiodic residue from \thmref{gen_lemR} must be over two letters and for all $c \in \A$ if $c$ is not a letter in the aperiodic residue, $c$ is singular for $\alpha$.
\end{proposition}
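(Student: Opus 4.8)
The plan is to split the statement into its two assertions---first that the aperiodic residue $\alpha^{(p)}$ uses exactly two letters, and second that every letter absent from $\alpha^{(p)}$ is singular---and to treat the first as the main work. A fact I would establish at the outset and use throughout is that \emph{every} residue of the singular decomposition is recurrent, and hence that each non-aperiodic residue is in fact periodic (a recurrent eventually periodic sequence is periodic). Recurrence of the residues would follow from the recurrence of $\alpha$ together with the observation, already used in the proof of \thmref{gen_lemR}, that the pattern of singular letters (the word $x$ of least period exactly $m$) forces any reappearance of a sufficiently long block to occur at a position congruent to the original $\bmod\ m$; thus a reappearing block of $\alpha$ lands on the same residue, transferring recurrence to $\alpha^{(p)}$ and to every $\alpha^{(i)}$.

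For the first assertion I would argue by contradiction: suppose $\ell' := \#\A_{\alpha^{(p)}} \ge 3$. The goal is to show $p_{\alpha^{(p)}}^*(j) \ge \ell' j \ge 3j$ for all $j$ and then lift this bound to $\alpha$. The lift reuses the window construction of \thmref{gen_lemR}: taking a maximal $(k-(m-1))$-sized window $\tau$ for $\alpha^{(p)}$ and extending it to a $k$-sized window $\tau' = m\tau \cup \{h_1, \dots, h_{m-1}\}$ with $h_i \equiv i \bmod m$, each $\tau$-word over $\alpha^{(p)}$ appears as the prefix of some word in $C_p(\tau',\alpha)$, so $p_{\alpha}^*(k) \ge \#C_p(\tau',\alpha) \ge p_{\alpha^{(p)}}^*(k-(m-1)) \ge 3k - 3(m-1)$. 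This is bounded below independently of $k$, contradicting $\liminf_{k\to\infty} p_{\alpha}^*(k) - 3k = -\infty$. By \thmref{Rec_2.1} (whose recurrence hypothesis is now available for $\alpha^{(p)}$), to obtain $p_{\alpha^{(p)}}^*(j) \ge \ell' j$ it suffices to show that $\alpha^{(p)}$ has \emph{no} singular letter.

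This last claim is the heart of the matter and I expect it to be the main obstacle. Suppose some $b \in \A_{\alpha^{(p)}}$ were singular for $\alpha^{(p)}$, i.e.\ $1_{\{b\}}(\alpha^{(p)})$ is periodic. Since every other residue is periodic, $1_{\{b\}}(\alpha^{(i)})$ is periodic for each $i$, and the interleaving $1_{\{b\}}(\alpha)$ is then periodic, so $b$ is singular for $\alpha$. By the definition of the decomposition cycle, the period $P_b$ of $1_{\{b\}}(\alpha)$ divides $m$; consequently $1_{\{b\}}(\alpha^{(p)})_j = 1_{\{b\}}(\alpha)_{mj+p}$ is constant in $j$, because $mj+p \equiv p \bmod P_b$. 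A constant value $1$ forces $\alpha^{(p)} = bbb\cdots$, contradicting aperiodicity, while a constant value $0$ contradicts $b \in \A_{\alpha^{(p)}}$. Hence $\alpha^{(p)}$ has no singular letter, and the first assertion follows with $\ell' = 2$.

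For the second assertion, let $c \in \A$ with $c \notin \A_{\alpha^{(p)}}$. Then $c$ occurs only in the non-aperiodic residues, each of which is periodic, so $1_{\{c\}}(\alpha^{(i)})$ is periodic for every $i$ (it is identically $0$ on residue $p$). The interleaving $1_{\{c\}}(\alpha)$ is therefore periodic, which is exactly the statement that $c$ is singular for $\alpha$. The only point to verify carefully is again the passage from ``eventually periodic'' to ``periodic'' for the residues, which is supplied by the recurrence of the residues established at the start.
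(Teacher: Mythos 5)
Your proposal is correct and takes essentially the same route as the paper: both arguments hinge on \thmref{Rec_2.1} applied to the aperiodic residue (yours via the contrapositive, lifting $p_{\alpha^{(p)}}^*(k) \geq 3k$ up to $\alpha$; the paper's directly, pushing $p_{\alpha^{(p)}}^*(k) \leq p_{\alpha}^*(k) < 3k$ down to $\alpha^{(p)}$), and both reach the identical core contradiction that a singular letter of $\alpha^{(p)}$ would be singular for $\alpha$ and hence could not appear in an aperiodic residue. Your explicit verification that recurrence passes to the residues---so that the non-aperiodic residues are genuinely periodic and \thmref{Rec_2.1} is applicable to $\alpha^{(p)}$---spells out a step the paper leaves implicit, but the substance of the argument is the same.
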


\begin{proof}
    Let $\alpha^{(p)}$ be the aperiodic residue from \thmref{gen_lemR}, $0 \leq p \leq m-1$.  Suppose towards a contradiction that $\#\A_{\alpha^{(p)}} \geq 3$.  Since for any $k \in \N$ and $k$-sized window $\tau$ over $\alpha^{(p)}$, $F_{\alpha^{(p)}}(\tau) \subset F_{\alpha}(m\tau)$, then $p_{\alpha^{(p)}}^*(k) \leq p_{\alpha}^*(k) < 3k$. Thus by \thmref{Rec_2.1}, 
    $\alpha^{(p)}$ must have a singular letter, call it $c$. Since all other residues are periodic and $c$ appears in $\alpha^{(p)}$ periodically, $c$ must appear in $\alpha$ periodically, and so $c$ is singular for $\alpha$ and thus cannot appear in $\alpha^{(p)}$, a contradiction.  Thus $\#\A_{\alpha^{(p)}} = 2$.
    
    Suppose $c \in \A$, $c \notin \A_{\alpha^{(p)}}$.  Then $c$ only appears in residues which are periodic.  Thus $c$ appears periodically in $\alpha$, so $c$ is singular for $\alpha$.
\end{proof}

Clearly, \thmref{gen_lemR} and \propref{rec_b} together prove \thmref{main} in the case when $\alpha$ is recurrent.

\section{Non-Recurrent Case}

This section is dedicated to proving \thmref{main} in the non-recurrent case. For the entirety of this section, we assume that $\alpha$ is a non-recurrent, aperiodic sequence, $\#\A_{\alpha} = \ell \geq 3$, and $\liminf\limits\limits_{k \to \infty} p_{\alpha}^*(k) - 3k = -\infty$.

Let $\beta$ be an auxiliary word for $\alpha$. By \thmref{NotRec_2.1}, since $p_{\alpha}^*(k) < 3k \leq \ell k$ for some $k \in \N$, $\Gamma$ is disconnected.  Let $m$ be the decomposition cycle of the singular decomposition of $\alpha$ with respect to $\beta$, $m \geq 2$ since the decomposition is non-trivial.  

For each eventually periodic $\alpha^{(i)}$ there exists an $n_i \in \N_0$ such that $\sigma^{n_i}(\alpha^{(i)})$ is periodic.  For each aperiodic $\alpha^{(i)}$ there exists an $n_i \in \N_0$ such that each letter in $\A_{\sigma^{n_i}(\alpha^{(i)})}$ appears in $\sigma^{n_i}(\alpha^{(i)})$ infinitely often. Let $t = \max\{n_i : 0 \leq i \leq m-1\}$.  Then for the sequence $\alpha' = \sigma^{mt}(\alpha)$, all residues of $\alpha'$ are aperiodic or periodic, every letter appearing in a residue of $\alpha'$ appears in that residue infinitely often, $\alpha = w\alpha'$ for some finite word $w$, and $p_{\alpha}^*(k) = p_{\alpha'}^*(k) + C'$ for some constant $C' \in \N_0$ and $k$ sufficiently large (since for any window, a finite prefix can only contribute a finite number of words).  We further note that $\beta$ is an auxiliary word for $\alpha'$ and the singular decomposition of $\alpha'$ is $\{\alpha'^{(i)} = \sigma^{t}(\alpha^{(i)}) : 0\leq i\leq m-1\}$.  The graph $\Gamma'$ of the residues of $\alpha'$ is such that $V(\Gamma) = V(\Gamma')$ and $E(\Gamma') \subset E(\Gamma)$, and so $\Gamma'$ is also disconnected.

\begin{lemma}
    \lemlab{CC}
    Exactly one connected component of $\Gamma'$ contains aperiodic residues and all other connected components of $\Gamma'$ contain only singular residues.
\end{lemma}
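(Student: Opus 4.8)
The plan is to argue entirely within the shifted sequence $\alpha' = \sigma^{mt}(\alpha)$, for which (by the construction in the preamble) every residue is aperiodic or periodic and every letter of a residue occurs infinitely often. Since $p_{\alpha}^*(k) = p_{\alpha'}^*(k) + C'$ for large $k$, the hypothesis transfers to $\liminf_{k\to\infty} p_{\alpha'}^*(k) - 3k = -\infty$. The one structural fact about $\Gamma'$ I will exploit is that its connected components partition the alphabet: if $i$ and $j$ lie in different components of $\Gamma'$ then $\A_{\alpha'^{(i)}}\cap\A_{\alpha'^{(j)}}=\emptyset$, since a shared letter would force an edge and hence place $i,j$ in the same component. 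I will prove three statements: (i) no two distinct components both contain an aperiodic residue; (ii) at least one component contains an aperiodic residue; (iii) every residue lying in a component with no aperiodic residue is singular.

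For (i), suppose toward a contradiction that $\alpha'^{(p)}$ and $\alpha'^{(q)}$ are aperiodic residues in distinct components $K_1\ni p$ and $K_2\ni q$. Fix $k$ and let $\rho$ be a maximal $k$-sized window for $\alpha'^{(p)}$, so $\#F_{\alpha'^{(p)}}(\rho)=p_{\alpha'^{(p)}}^*(k)\geq 2k$ by \thmref{KZ_HM}. Pass to the scaled window $m\rho$ over $\alpha'$. A routine index computation shows that for a start $n\equiv p \pmod m$ the word $\alpha'[n+m\rho]$ is exactly a $\rho$-word of $\alpha'^{(p)}$, so $\#C_p(m\rho,\alpha')=\#F_{\alpha'^{(p)}}(\rho)\geq 2k$, and likewise $\#C_q(m\rho,\alpha')=\#F_{\alpha'^{(q)}}(\rho)\geq k+1$ by \factref{k+1}. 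The key simplification over \thmref{gen_lemR} is that disjointness of these two sets is now automatic: since $(m\rho)(0)=0$, a word of $C_p$ opens with a letter of $\A_{\alpha'^{(p)}}\subseteq\A_{K_1}$ and a word of $C_q$ opens with a letter of $\A_{K_2}$, and $\A_{K_1}\cap\A_{K_2}=\emptyset$, so no auxiliary ``pinning'' indices $h_i$ are needed. Hence $p_{\alpha'}^*(k)\geq\#F_{\alpha'}(m\rho)\geq \#C_p+\#C_q\geq 3k+1$ for every $k$, contradicting $\liminf_k p_{\alpha'}^*(k)-3k=-\infty$.

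For (ii), note that $\alpha'$ is aperiodic, being a finite shift of the aperiodic $\alpha$; were every residue periodic, the interleaving $\alpha'$ would itself be periodic, a contradiction. Thus some residue is aperiodic, and by (i) all aperiodic residues lie in a single component $K_*$. For (iii), any component $K\neq K_*$ then contains only periodic residues. Each letter $c\in\A_K$ occurs only inside residues of $K$ (the partition property), and in each of them periodically; therefore $1_{\{c\}}(\alpha')$ is periodic, i.e. $c$ is singular. Hence every residue of $K$ consists of singular letters, which is the desired assertion.

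The genuinely routine parts are the index bookkeeping identifying $C_i(m\rho,\alpha')$ with $F_{\alpha'^{(i)}}(\rho)$ and the transfer of the $\liminf$ condition from $\alpha$ to $\alpha'$. I expect the step requiring the most care to be (iii): one must verify that ``$K$ contains no aperiodic residue'' truly upgrades to ``every letter of $K$ is singular,'' which hinges on the component-partition of the alphabet confining each letter's occurrences to periodic residues and hence making its indicator globally periodic. A secondary point worth flagging is that this lemma needs only the $3k$ threshold, since a non-recurrent sequence is never uniformly recurrent, so the $4k$ strengthening available in \thmref{gen_lemR} does not enter here.
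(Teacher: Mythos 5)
Your proposal is correct, and its counting core is the paper's own argument: parts (i) and (ii) reproduce, essentially verbatim, the paper's contradiction via a maximal $k$-sized window $\rho$ for one aperiodic residue, scaled to $m\rho$, with disjointness of the two word sets coming for free from the disjoint alphabets of distinct components, yielding $\#F_{\alpha'}(m\rho)\geq 2k+(k+1)$ for every $k$ against the $\liminf$ hypothesis (your identification $C_p(m\rho,\alpha')=F_{\alpha'^{(p)}}(\rho)$ is the same bookkeeping the paper leaves implicit). The genuine divergence is in part (iii). You argue directly on $\alpha'$: each letter of a non-aperiodic component occurs only in periodic residues, so its indicator in $\alpha'$ is periodic and the letter is singular for $\alpha'$. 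The paper instead routes through the auxiliary word $\beta$: for $i\in S$ the residue $\beta^{(i)}$ is periodic, its letters occur in $\beta$ only inside $S$-residues, hence they are singular for $\beta$, and only then singular for $\alpha'$. Your shortcut is cleaner and covers every downstream use of the lemma (\lemref{gamma_stuff} needs only that the letters of the other components appear periodically, and \thmref{gen_lemNR} needs only that all aperiodic residues lie in one component). What the detour through $\beta$ buys is strictly more: a letter singular for $\beta$ has $m$ as a period of its indicator in $\beta$ (by the definition of the decomposition cycle), so it fills whole residue classes of $\beta$, which together with $\beta^{(i)}\in\overline{\Ob(\alpha^{(i)})}$ forces each $\alpha'^{(i)}$, $i\in S$, to be \emph{constant} --- and that is how the paper's proof actually ends. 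Your argument cannot recover constancy: singularity for $\alpha'$ alone does not exclude a residue of the form $cdcdcd\ldots$ with both $c$ and $d$ singular for $\alpha'$. So under the weaker reading of ``singular residue'' (a residue all of whose letters are singular for $\alpha'$) you are done; under the stronger reading the paper's proof delivers (a constant residue on a singular letter), your proof stops one step short, and the missing step genuinely requires the auxiliary word $\beta$.
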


\begin{proof}
    Since $\alpha'$ is aperiodic, at least one connected component of $\Gamma'$ contains residues which are aperiodic.
    
    Suppose towards a contradiction that there are two disjoint connected components of $\Gamma'$ which contain aperiodic residues.  Then there are two residues, $i \neq j$  for which $\alpha'^{(i)}$ and $\alpha'^{(j)}$ are both aperiodic on disjoint alphabets. Let $k \in \N$ and let $\tau$ be a maximal $k$-sized window over $\alpha'^{(i)}$.  Then by \thmref{KZ_HM}, $\#F_{\alpha'^{(i)}}(\tau) \geq 2k$.  Furthermore, $F_{\alpha'^{(i)}}(\tau) \cap F_{\alpha'^{(j)}}(\tau) = \emptyset$ since the alphabets of $\alpha'^{(i)}$ and $\alpha'^{(j)}$ are disjoint, and by \factref{k+1}, $\#F_{\alpha'^{(j)}}(\tau) \geq k+1$ since $\alpha'^{(j)}$ is aperiodic.  Then $\#F_{\alpha'}(m\tau) \geq 2k + k + 1$, a contradiction.

    Let $A \subset \{0, 1, ..., m-1\}$ be the set of residues in the aperiodic connected component and $S = \{0, 1, ..., m-1\} \setminus A$ be all other residues.

    Let $i \in S$.  Then $\alpha'^{(i)}$ is periodic, and so $\beta^{(i)}$ is also periodic.  Since the letters appearing in $\beta^{(i)}$ only appear in residues from $S$, those letters must appear periodically in $\beta$ and thus are singular for $\beta$.  Therefore, since the letters are singular for $\beta$ and periodic in $\alpha$, they must be singular for $\alpha'$, and for all $i \in S$, $\alpha'^{(i)}$ is constant.
\end{proof}

For the remainder of this section let $A$ be the set of residues in the aperiodic connected component and $S$ be the collection of all other residues.

\begin{lemma}
    \lemlab{gamma_stuff}
    If more than two letters appear in a single aperiodic residue, $\alpha'^{(i)}$, then one of the letters must appear periodically within $\alpha'^{(i)}$.
\end{lemma}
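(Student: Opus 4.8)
The plan is to argue by contradiction and to reduce the statement to the recurrent setting, where \thmref{Rec_2.1} is available, by passing to the auxiliary word. Suppose $\alpha'^{(i)}$ is one of the aperiodic residues, that $\#\A_{\alpha'^{(i)}} \geq 3$, and that no letter of $\alpha'^{(i)}$ appears periodically within it. The target is to show $p_{\alpha'}^*(k) \geq 3k$ for all $k$, which contradicts the standing hypothesis $\liminf_{k \to \infty} p_{\alpha'}^*(k) - 3k = -\infty$, and hence forces some letter of $\alpha'^{(i)}$ to appear periodically.

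First I would move from $\alpha'^{(i)}$ to the residue $\beta^{(i)}$ of the auxiliary word. Since $\beta$ is recurrent, each $\beta^{(i)}$ is recurrent; and since $\beta^{(i)} \in \overline{\Ob(\alpha^{(i)})}$ with every letter of $\alpha'^{(i)}$ occurring infinitely often, each $\tau$-word of $\beta^{(i)}$ occurs in $\alpha'^{(i)}$, so $F_{\beta^{(i)}}(\tau) \subseteq F_{\alpha'^{(i)}}(\tau)$ and therefore $p_{\alpha'^{(i)}}^*(k) \geq p_{\beta^{(i)}}^*(k)$. Together with $F_{\alpha'^{(i)}}(\tau) = C_i(m\tau, \alpha') \subseteq F_{\alpha'}(m\tau)$, which gives $p_{\alpha'^{(i)}}^*(k) \leq p_{\alpha'}^*(k)$, the problem is transported to the recurrent sequence $\beta^{(i)}$: it suffices to show that $\beta^{(i)}$ has at least three letters and no singular letter, for then the contrapositive of \thmref{Rec_2.1} forces $p_{\beta^{(i)}}^*(k) \geq \#\A_{\beta^{(i)}} \cdot k \geq 3k$ for all $k$, whence $p_{\alpha'}^*(k) \geq p_{\alpha'^{(i)}}^*(k) \geq p_{\beta^{(i)}}^*(k) \geq 3k$.

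The principal obstacle is precisely establishing that $\beta^{(i)}$ inherits from $\alpha'^{(i)}$ both the presence of three distinct letters and the absence of a singular letter. Because $\alpha'^{(i)}$ need not be recurrent, passing to the limit $\beta^{(i)}$ could a priori delete a letter that recurs with unboundedly growing gaps, or create a spurious periodic placement for some letter. I would resolve this using the construction of $\beta$ together with the global complexity bound: the shifts defining $\beta$ move by multiples of $m$ and hence respect residues, so the placement pattern of each letter inside residue $i$ is reproduced by $\beta^{(i)}$. If, on the other hand, some letter $c$ occurring infinitely often in $\alpha'^{(i)}$ collapsed to a periodic or absent pattern in $\beta^{(i)}$ while remaining aperiodically placed in $\alpha'^{(i)}$, then the aperiodic indicator $1_{\{c\}}(\alpha'^{(i)})$ would generate $\tau$-words beyond those of $\beta^{(i)}$; choosing a window that simultaneously witnesses the two-letter structure on $\{a,b\}$ and the aperiodic placement of $c$ would again drive $\#F_{\alpha'^{(i)}}(\tau)$ to at least $3k$. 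I would make this rigorous by the same disjointness-of-word-set bookkeeping used in \thmref{gen_lemR} and \lemref{CC}, isolating the contribution of the $\{a,b\}$-structure from that of $1_{\{c\}}(\alpha'^{(i)})$.

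Either way the conclusion is $p_{\alpha'^{(i)}}^*(k) \geq 3k$, contradicting the hypothesis, so the assumption that no letter of $\alpha'^{(i)}$ is periodic is untenable. The delicate point throughout is keeping the lower bound honest for non-recurrent $\alpha'^{(i)}$: I expect the technical heart of the write-up to be verifying that whichever degeneration (letter loss or spurious periodization) could in principle occur when passing to $\beta^{(i)}$ is exactly the degeneration that the constraint $\liminf_{k \to \infty} p_{\alpha'}^*(k) - 3k = -\infty$ rules out.
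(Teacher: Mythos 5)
Your reduction to the global auxiliary word $\beta$ only works in the favorable half of your dichotomy, and the unfavorable half --- which you correctly flag as ``the principal obstacle'' but then only gesture at --- is exactly where the argument breaks down. Suppose some letter $c$ of $\alpha'^{(i)}$ is absent from (or singular in) $\beta^{(i)}$. Your fallback is to combine ``the two-letter structure on $\{a,b\}$'' (worth $2k$ words) with the aperiodic placement of $c$ (worth roughly $k$ more). But inside a single residue there is no guarantee of any aperiodic two-letter structure: when $c$ degenerates in the limit, $\beta^{(i)}$ may simply be periodic. Concretely, let $\alpha'^{(i)}$ be $(ab)^{\infty}$ with the symbol at each position $2^s$ replaced by $c$. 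No letter appears periodically in this sequence, yet the only recurrent points in its orbit closure are shifts of $(ab)^{\infty}$, so $F_{\beta^{(i)}}(\tau)$ contributes just $2$ words, and \factref{k+1} applied to the indicator $1_{\{c\}}(\alpha'^{(i)})$ contributes only about $k$ more; your disjoint bookkeeping then tops out near $k+O(1)$, far short of the needed $3k-O(1)$. (This residue really does have complexity at least $3k$, but the surplus words come from windows realizing many multi-$c$ hit patterns simultaneously --- words with $c$'s in several slots at once --- an effect that a ``$2k$ plus $k$'' count never sees. Note also that \thmref{KZ_HM} on the residue gives $2k$ words, but those already involve the $c$'s, so they cannot be cleanly combined with a disjoint batch of $c$-words.)

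What closes this case in the paper is not bookkeeping but a recursive application of the Kamae--Rao machinery to the residue itself, with its \emph{own} auxiliary word rather than the global $\beta$. The paper's proof is a short case split: if $\alpha'^{(i)}$ is recurrent, then $p_{\alpha'^{(i)}}^*(k) \leq p_{\alpha'}^*(k) < 3k$ and \thmref{Rec_2.1} gives a singular letter of $\alpha'^{(i)}$ directly (this branch roughly matches your favorable case); if $\alpha'^{(i)}$ is not recurrent, then \thmref{NotRec_2.1} makes the graph of the singular decomposition of $\alpha'^{(i)}$ disconnected, and \lemref{CC} applied to $\alpha'^{(i)}$ shows every residue outside its aperiodic component is constant, i.e.\ some letter appears periodically within $\alpha'^{(i)}$. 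In the example above this succeeds because $c$ occurs in both residues of $\alpha'^{(i)}$'s own decomposition, so that graph is connected and \thmref{NotRec_2.1} forces $p_{\alpha'^{(i)}}^*(k) \geq 3k$. Your proposal never invokes \thmref{NotRec_2.1} for the residue, and without it (or a reproof of its content) the degenerate case cannot be closed.
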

\begin{proof}
    Suppose $\alpha'^{(i)}$ is an aperiodic residue for $\alpha'$ and $\#\A_{\alpha^{(i)}} \geq 3$.  
    Then $p_{\alpha'^{(i)}}^*(k) \leq p_{\alpha'}^*(k) < 3k$ for sufficiently large $k$.

    If $\alpha'^{(i)}$ is recurrent, then by \thmref{Rec_2.1}, it must have some singular letter, and by definition, this letter is periodic in $\alpha'^{(i)}$.
    
    If $\alpha'^{(i)}$ is not recurrent, then by \thmref{NotRec_2.1}, the graph $\Gamma_{i}'$ of $\alpha'^{(i)}$ is disconnected, and by \lemref{CC}, only one connected component has aperiodic pieces and all other pieces are singular for $\alpha'^{(i)}$.  Thus there is at least one letter which appears periodically in $\alpha'^{(i)}$. 
\end{proof}

\begin{lemma}
    \lemlab{aper_let}
    If $c \in \A_A := \bigcup_{i \in A} \A_{\alpha'^{(i)}}$, then there exists some residue $j$, $0\leq j \leq m-1$ such that $c$ appears aperiodically in $\alpha'^{(j)}$.
\end{lemma}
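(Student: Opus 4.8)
The plan is to prove the contrapositive: if $c$ appears aperiodically in no residue, i.e.\ $1_{\{c\}}(\alpha'^{(j)})$ is periodic for every $j$, then $c\notin\A_A$. The first step is to upgrade this residue-wise periodicity to genuine singularity of $c$ in $\alpha'$. Writing $p_j$ for the period of $c$ in $\alpha'^{(j)}$ and using that position $n$ of $\alpha'$ sits in residue $n\bmod m$, the indicator $1_{\{c\}}(\alpha')$ is purely periodic with period $m\cdot\operatorname{lcm}_j p_j$; hence $c$ is singular for $\alpha'$.

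Next I would transfer this to the auxiliary word $\beta$ and feed it into the decomposition cycle. Since $\beta$ is an auxiliary word for $\alpha'$ we have $\beta\in\overline{\Ob(\alpha')}$, and applying the continuous, shift-commuting map $x\mapsto 1_{\{c\}}(x)$ shows that $1_{\{c\}}(\beta)$ lies in the orbit closure of the purely periodic sequence $1_{\{c\}}(\alpha')$. The orbit closure of a purely periodic sequence is just its finite set of cyclic shifts, all of which share its minimal period $q$; therefore $1_{\{c\}}(\beta)$ also has minimal period $q$, so $c$ is singular for $\beta$. By the definition of the decomposition cycle, $m$ is a period of $1_{\{c\}}(\beta)$, and so $q\mid m$. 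Consequently $1_{\{c\}}(\alpha')$ has period dividing $m$, which forces $1_{\{c\}}(\alpha'^{(j)})$ to be \emph{constant} in each residue: every residue either contains $c$ at all positions, i.e.\ equals $ccc\cdots$, or contains no $c$ at all.

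Finally I would close with a connectivity argument in $\Gamma'$. Suppose $c\in\A_A$; then $c\in\A_{\alpha'^{(i_0)}}$ for some $i_0\in A$, and by the previous step $\alpha'^{(i_0)}=ccc\cdots$, whose only letter is $c$. Any residue adjacent to $i_0$ in $\Gamma'$ must share this single letter, hence also equals $ccc\cdots$; propagating along edges, the entire connected component of $i_0$ consists of constant $c$-residues. Since that component is $A$ itself, this contradicts \lemref{CC}, which guarantees that $A$ contains an aperiodic residue. Thus $c\notin\A_A$, establishing the contrapositive.

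I expect the main obstacle to be the period-matching in the second step: one must verify that forming the limit that defines $\beta$ cannot \emph{shrink} the period of $c$, for otherwise the singularity of $c$ for $\beta$ would not translate into the divisibility $q\mid m$ that makes the decomposition cycle separate $c$ out. The observation that the orbit closure of a purely periodic sequence contains only its cyclic shifts is precisely what rules this out, and it is what prevents the deceptive configuration of an aperiodic residue secretly carrying a periodically placed copy of $c$ — the true decomposition cycle always resolves such a $c$ into its own constant residues.
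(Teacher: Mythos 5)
Your proposal is correct and follows essentially the same route as the paper's proof: transfer the residue-wise periodicity of $c$ to the auxiliary word $\beta$, conclude that $c$ is singular for $\beta$ and hence for $\alpha'$, and derive a contradiction with $c \in \A_A$. In fact your write-up is more complete than the paper's terse proof, which ends with an unexplained ``a contradiction'': your divisibility step (that $q \mid m$ because the orbit closure of a purely periodic sequence consists only of its cyclic shifts, so the decomposition cycle forces $m$ to be a period of $1_{\{c\}}(\alpha')$) and your final propagation of $c^\infty$ along edges of $\Gamma'$ to contradict \lemref{CC} are exactly the details the paper leaves implicit.
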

\begin{proof}
    Let $c \in \A_A$.  If $c$ appears periodically in each $\alpha'^{(j)}$, $0 \leq j \leq m-1$ then $c$ would appear periodically in each $\beta^{(j)}$.  Thus $c$ would be singular for $\beta$, and since $c$ is periodic in every residue of $\alpha'$, $c$ would be singular for $\alpha'$, a contradiction.
\end{proof}

\begin{theorem}
    \thmlab{A_size}
    The alphabet of the aperiodic residues, $\A_A = \bigcup_{i \in A} \A_{\alpha'^{(i)}}$, has exactly two letters.
\end{theorem}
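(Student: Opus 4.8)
The plan is to establish $\#\A_A\ge 2$ by a one-line observation and $\#\A_A\le 2$ by contradiction, exhibiting for all large $k$ windows of complexity at least $3k-O(1)$ and thereby violating the standing hypothesis $\liminf_{k\to\infty}p_{\alpha}^*(k)-3k=-\infty$. For the lower bound, the aperiodic connected component $A$ contains an aperiodic residue, and an aperiodic sequence cannot be written over a single letter; hence at least two letters occur among the $A$-residues, so $\#\A_A\ge 2$.

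For the upper bound, suppose $\#\A_A\ge 3$. Since $p_\alpha^*(k)=p_{\alpha'}^*(k)+C'$ for large $k$, it suffices to force $p_{\alpha'}^*(k)\ge 3k-c$ for all large $k$ and some fixed $c$; this makes $\liminf_{k\to\infty}(p_\alpha^*(k)-3k)$ finite, the desired contradiction. The vehicle is the family of windows $m\tau_0$: a read beginning at a position $\equiv i\pmod m$ isolates the $i$-th residue, so that $F_{\alpha'}(m\tau_0)=\bigcup_i F_{\alpha'^{(i)}}(\tau_0)$ and $\#C_i(m\tau_0,\alpha')=\#F_{\alpha'^{(i)}}(\tau_0)$. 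The task is then to make two residue contributions add up to $3k-O(1)$.

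I would first locate two residues carrying the three letters. By \lemref{aper_let} each of the $\ge 3$ letters of $\A_A$ appears aperiodically in some residue, while by \lemref{gamma_stuff} no single aperiodic residue can host three letters aperiodically; hence these aperiodic occurrences are spread across at least two distinct residues $p\ne q$, both aperiodic. Choosing $\tau_0$ to be a maximal $k$-sized window for $\alpha'^{(p)}$ gives $\#F_{\alpha'^{(p)}}(\tau_0)\ge 2k$ by \thmref{KZ_HM}, and the same $\tau_0$ gives $\#F_{\alpha'^{(q)}}(\tau_0)\ge k+1$ by \factref{k+1}. It remains to add these two contributions with only $O(1)$ loss.

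The crux is separating the $\tau_0$-words of $\alpha'^{(p)}$ from those of $\alpha'^{(q)}$. A common word must be spelled over $\A_{\alpha'^{(p)}}\cap\A_{\alpha'^{(q)}}$, so when this intersection is a single letter there is at most one common word and $\#F_{\alpha'}(m\tau_0)\ge 2k+(k+1)-1=3k$ at once. The obstinate case is when the two residues share both of their aperiodic letters and the third letter of $\A_A$ enters only through a letter that is periodic inside these residues; this is the main obstacle. Here I would instead augment $m\tau_0$ by $m-1$ coordinates placed on the positions of the singular letters of $\alpha'$, whose occurrence pattern has least period $m$. Exactly as in the proof of \thmref{gen_lemR}, reading this pattern fingerprints the starting residue and forces $C_p(\tau',\alpha')\cap C_q(\tau',\alpha')=\emptyset$, giving $\#F_{\alpha'}(\tau')\ge 2k+(k+1)$ on a window of size $k+m-1$. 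Either way $p_{\alpha'}^*(k)\ge 3k-O(1)$ for all large $k$, the contradiction. The delicate verification that the period-$m$ singular pattern genuinely distinguishes the two relevant residues in the non-recurrent decomposition governed by the auxiliary word $\beta$, and that the third letter can always be routed through \lemref{aper_let} to a residue realizing it aperiodically, is where the real work lies.
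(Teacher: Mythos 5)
Your lower bound and your opening reductions are fine, and your count $\#F_{\alpha'}(m\tau_0)\geq 2k+(k+1)-1$ in the case where the two aperiodic residues share at most one letter is correct. The genuine gap is exactly the step you defer as ``where the real work lies,'' and it cannot be filled by your method: in this non-recurrent setting the fingerprinting device from \thmref{gen_lemR} is not available. The decomposition cycle $m$ is the decomposition cycle of the auxiliary word $\beta$, not of $\alpha'$, so there is no guarantee that the pattern of constant letters of $\alpha'$ (the residues in $S$, which by \lemref{CC} are the only residues carrying letters outside $\A_A$) has least period $m$. That pattern has some least period $m_0$ dividing $m$, and $m_0$ can be a proper divisor: for example, $\beta$ may have singular letters lying in $\A_A$ (constant residues $\beta^{(j)}=a^\infty$ with $j\in A$), which force $m$ to be large while leaving no trace among the constant residues of $\alpha'$. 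If the two aperiodic residues satisfy $p\equiv q \pmod{m_0}$, then for every offset $i$ the residues $p+i$ and $q+i$ are either both in $A$ or both in $S$ carrying the same constant letter, so no choice of added window positions can force $C_p(\tau',\alpha')\cap C_q(\tau',\alpha')=\emptyset$; and since in your obstinate case the two residues also share two letters, the inclusion-exclusion bound collapses and you get nothing beyond $2k+O(1)$. This configuration cannot be waved away, and you cannot borrow the paper's later treatment of the congruence $i\equiv p\pmod{m_0}$ in \thmref{gen_lemNR}, because that step explicitly invokes \thmref{A_size} itself --- your route would be circular. (A smaller quibble: \lemref{gamma_stuff} only produces \emph{one} periodically occurring letter, so ``no single aperiodic residue can host three letters aperiodically'' does not literally follow when a residue has four or more letters; the existence of two distinct aperiodic residues can still be salvaged, but not by the sentence you wrote.)

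The paper's proof avoids residue separation entirely, which is why it succeeds where your plan stalls. It uses that $\beta$ is recurrent and lies in $\overline{\Ob(\alpha')}$, so $p_{\beta}^*(k)\leq p_{\alpha'}^*(k)$ and the recurrent-case results of Section 3 apply to $\beta$: either $\beta$ is periodic, or $\beta$ has the structure of \thmref{main} with a unique aperiodic residue over two letters $\{a,b\}$. In each case the third letter $c\in\A_A$, located via \lemref{aper_let} and compared against $\beta^{(i)}\in\overline{\Ob(\alpha'^{(i)})}$, yields for each $0<j<k$ a $\tau$-word of a distinctive \emph{shape} --- $c^j d_j u_j$ when $\beta^{(i)}=c^\infty$ (arbitrarily long $c$-runs followed by a deviation), or $u_j c u_j'$ when $c$ does not appear in $\beta^{(i)}$ --- and these $k-2$ words are disjoint from the $2k$ words over $\{a,b\}$ by inspection of their letters, not by the residue at which the read begins. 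This gives $\#F_{\alpha'}(m\tau)\geq 3k-O(1)$ with no separation lemma, which is precisely the ingredient your proposal is missing.
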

\begin{proof}
    Clearly $\#\A_A \geq 2$ or else no residue from $A$ would be aperiodic.  Thus, suppose towards a contradiction that $\#\A_A > 2$.

    Since $\beta$ is contained in the orbit closure of $\alpha'$, $p_{\beta}^*(k) \leq p_{\alpha'}^*(k)$.  Thus, since $\beta$ is recurrent, $\beta$ either has the form outlined in the conclusion of \thmref{main} by Section 3, or $\beta$ is periodic.

    Let $a, b, $ and $c$ be distinct letters in $\A_A$ and let $k > m$.
\newline
\newline
    \textbf{Case I:} Suppose $\beta$ has the form outlined in the conclusion of \thmref{main}. 
    
    Suppose $\beta^{(p)}$, $0 \leq p \leq m-1$, is the only aperiodic residue and is over the letters $\{a, b\}$, and all other residues of $\beta$ are either nonconstant periodic over $\{a, b\}$ or constant.
    
    Let $\tau$ be a maximal $k$-sized window over $\alpha'^{(p)}$.  Since $\beta$ is in the orbit closure of $\alpha'$, $F_{\beta^{(p)}}(\tau) \subset F_{\alpha'^{(p)}}(\tau)$, and since $\beta^{(p)}$ is aperiodic, $F_{\alpha'^{(p)}}(\tau)$ contains at least $2k$ $\tau$-words which only have $a$ and $b$ as letters.
    
    By \lemref{aper_let}, since $c \in \A_A$, there exists some residue $i$, $0\leq i \leq m-1$, such that $c$ appears aperiodically (and infinitely) in $\alpha'^{(i)}$.  Since $c \neq a, b$, then either $\beta^{(i)} = c^{\infty}$ or $c$ does not appear in $\beta^{(i)}$ because any nonconstant residue of $\beta$ can only be over the letters $a$ and $b$.  
    
    If $\beta^{(i)} = c^{\infty}$, then by the aperiodicity of $\alpha'^{(i)}$, there are $k-2$ $\tau$-words over $\alpha'^{(i)}$ of the form $c^jd_ju_j$, $0 < j< k$, $d_j \neq c$, and $u_j \in \A^{k-j-1}$.  Thus $p_{\alpha}^*(k) \geq \#F_{\alpha'}(m\tau) \geq 3k-2$, a contradiction.

    If $c$ does not appear in $\beta^{(i)}$, then there are at least $k-2$ $\tau$-words over $\alpha'^{(i)}$ with the form $u_jcu_j'$ for $0 < j < k$, and some $u_j \in (\A \setminus \{c\})^{j}$, $u_j' \in \A^{k-j-1}$. Thus $p_{\alpha}^*(k) \geq \#F_{\alpha'}(m\tau) \geq 3k-2$ for all $k > m$, a contradiction.
\newline
\newline
    \textbf{Case II:} Suppose $\beta$ is periodic.

    Since every letter of $\A_A$ must appear aperiodically in some residue of $\alpha'$ and $\#\A_A\geq 3$, by \lemref{gamma_stuff}, there must be at least two residues of $\alpha'$ which are aperiodic, $\alpha'^{(p)}$ and $\alpha'^{(i)}$, for some $0\leq p < i \leq m-1$.  Since $\beta$ is periodic, every letter in $\A_{\beta}$ is singular for $\beta$.  Then, without loss of generality, either $a^{\infty} = \beta^{(p)} \neq \beta^{(i)} = b^{\infty}$ or $a^{\infty} = \beta^{(p)} = \beta^{(i)}$.

    Let $\tau$ be a maximal $k$-sized window over $\alpha'^{(p)}$.
\newline
\newline
    \textbf{Case IIa:} Suppose that $a^{\infty} = \beta^{(p)} \neq \beta^{(i)} = b^{\infty}$.  Then $b$ must appear in $\alpha'^{(p)}$ infinitely often (and likewise $a$ must appear in $\alpha'^{(i)}$ infinitely often) or else there are at least $2k$ $\tau$-words in $\alpha'^{(p)}$ with no $b$ in them and $k-2$ $\tau$-words of the form $b^jdu_j$ for $0<j<k$, $d \neq b$, and $u_j \in \A^{k-j-1}$ in $\alpha^{(i)}$.  Thus $\#F_{\alpha}(m\tau) \geq 2k + k -2$, a contradiction.

    Since $c$ must appear in some residue of $\alpha'$ aperiodically, then either there exists some $q$, $0\leq q \leq m-1$ and $q \neq p,i$ such that $\beta^{(q)} = c^{\infty}$, or $c$ appears in $\alpha'^{(p)}$ and $\alpha'^{(i)}$ infinitely often (since appearing in one and not the other would give at least $2k$ words with no $c$ in one residue and $k-2$ words of the form $u_jcu_j'$ for $0<j<k$ in the other residue).

    If there exists some $q$, $0\leq q \leq m-1$ and $q \neq p,i$ such that $\beta^{(q)} = c^{\infty}$, then there are $k-2$ $\tau$-words of the form $a^jdu_j$ for $0<j<k$, $d \neq b$, and $u_j \in \A^{k-j-1}$  in $\alpha^{(p)}$; $k-2$ $\tau$-words of the form $b^jeu_j$ for $0<j<k$, $e \neq b$, and $u_j \in \A^{k-j-1}$  in $\alpha^{(i)}$; and $k-2$ $\tau$-words of the form $c^jfu_j$ for $0<j<k$, $f \neq c$, and $u_j \in \A^{k-j-1}$ in  in $\alpha^{(q)}$.  Thus $\#F_{\alpha}(m\tau) \geq 3k - 6$ for all $k > m$, a contradiction.

    Thus, $c$ appears in $\alpha'^{(p)}$ infinitely often.  Since $\alpha'^{(p)}$ sees $a, b,$ and $c$ infinitely often, by \lemref{gamma_stuff}, there is at least one letter in $\alpha'^{(p)}$ which appears periodically in $\alpha'^{(p)}$, a contradiction.
\newline
\newline
    \textbf{Case IIb:} Now assume that $\beta^{(p)} = \beta^{(i)} = a^{\infty}$.  Further assume that there are no residues $q$ for which $\alpha'^{(q)}$ is aperiodic and $\beta^{(q)} = d^{\infty}$ for $d \neq a$ or else we can reduce to Case IIa.

    Thus $b$ and $c$ must appear aperiodically in either $\alpha'^{(p)}$ or $\alpha'^{(i)}$ by \lemref{aper_let}.

    If, without loss of generality, $b$ only appears in $\alpha'^{(i)}$ and not in $\alpha'^{(p)}$, then there are at least $2k$ $\tau$-words from $\alpha'^{(p)}$ which do not contain a $b$, and $k-2$ $\tau$-words from $\alpha'^{(i)}$ of the form $u_jbu_j'$ for $0<j<k$, $u_j \in (\A\setminus\{b\})^j$ and $u_j' \in \A^{k-j-1}$.

    Thus $b$ and $c$ appear in $\alpha'^{(p)}$ infinitely often.  Then by \lemref{gamma_stuff}, there is some letter in $\alpha'^{(p)}$ which appears periodically, a contradiction.

    Therefore by cases I and II, $\#\A_A = 2$.
    \end{proof}  

\begin{theorem}
    \thmlab{gen_lemNR}
    There exists some decomposition of $\alpha'$ into $1< m_0 \leq m$ residues, where $m$ is the singular decomposition cycle of $\alpha'$, such that exactly one residue of the decomposition is aperiodic over two letters and any letter not in $\A_A$ is singular for $\alpha'$.
\end{theorem}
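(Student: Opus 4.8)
The plan is to reproduce the counting argument of \thmref{gen_lemR}, but with the singular–decomposition cycle $m$ replaced by the least period $m_0$ of the configuration of the singular letters of $\alpha'$ \emph{itself}. This $m_0$ can be strictly smaller than $m$ (precisely when the auxiliary word $\beta$ is periodic, so that $a,b$ are singular for $\beta$ but not for $\alpha'$), which is why the statement only asserts $m_0\le m$. First I would pin down the singular letters of $\alpha'$: by \lemref{aper_let} each of $a,b$ occurs aperiodically in some residue and so is not singular, while by \lemref{CC} every letter of $\A\setminus\A_A$ lies only in constant residues, hence occurs periodically in $\alpha'$ and is singular. This already yields the second conclusion. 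Encode these letters in $x\in(\A\cup\{?\})^{\N_0}$ by setting $x_i=\alpha'_i$ when $\alpha'_i$ is singular and $x_i=\,?$ otherwise, and let $m_0$ be the least period of $x$.

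Next I would verify $1<m_0\le m$. Since every singular letter of $\alpha'$ has a period dividing $m$, the word $x$ is $m$-periodic, so $m_0\mid m$ and $m_0\le m$; and $m_0\ge 2$ because $x$ is nonconstant (the letters $a,b$ force a $?$, while $\ell\ge 3$ together with $\#\A_A=2$ from \thmref{A_size} forces at least one singular letter). I then decompose $\alpha'$ into the $m_0$ residues $\alpha'_i\alpha'_{i+m_0}\alpha'_{i+2m_0}\cdots$ for $0\le i<m_0$. Because $x$ has period $m_0$, each class is of exactly one of two kinds: a class with $x_i\neq\,?$, on which $\alpha'$ is the constant singular letter $x_i$, or a class with $x_i=\,?$, on which $\alpha'$ only sees $a$ and $b$.

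It remains to show exactly one residue is aperiodic. At least one is, since otherwise $\alpha'$ would be a finite interleaving of eventually periodic sequences, hence eventually periodic. For uniqueness, suppose two residues $p\neq i$ were aperiodic; each must be a $?$-class, hence over $\{a,b\}$. Mirroring \thmref{gen_lemR}, take $\tau$ a maximal $(k-(m_0-1))$-sized window over the $p$th residue and lift it to a $k$-sized window $\tau'=m_0\tau\cup\{h_1,\dots,h_{m_0-1}\}$ with $h_j\equiv j\pmod{m_0}$, so that $\tau'$ meets every class modulo $m_0$. Reading the singular symbols at these representatives recovers a full cyclic shift of $x$, and since $x$ has \emph{least} period $m_0$ distinct starting phases give distinct patterns there (a singular letter is never equal to $a$ or $b$); hence $C_p(\tau',\alpha')$ and $C_i(\tau',\alpha')$, taken with residues modulo $m_0$, are disjoint. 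Then $\#C_p(\tau',\alpha')\ge 2\bigl(k-(m_0-1)\bigr)$ by \thmref{KZ_HM} and $\#C_i(\tau',\alpha')\ge\bigl(k-(m_0-1)\bigr)+1$ by \factref{k+1}, so $p_{\alpha'}^*(k)\ge 3k-3(m_0-1)+1$ for all large $k$, forcing $\liminf_{k\to\infty}\bigl(p_{\alpha'}^*(k)-3k\bigr)\ge -3(m_0-1)+1>-\infty$, contrary to hypothesis. Thus exactly one residue is aperiodic, and being a $?$-class it sees only $a,b$; being aperiodic it uses both, so it is aperiodic over two letters.

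The main obstacle is this uniqueness step, and specifically the identification of the correct modulus: the count only goes through once one knows that $x$ has least period \emph{exactly} $m_0$, as this is precisely what lets the representative indices $h_j$ recover the starting phase and thereby force the sets $C_j(\tau',\alpha')$ to be disjoint even though all the aperiodic residues share the single alphabet $\{a,b\}$. Everything downstream is then the verbatim counting of \thmref{gen_lemR} with $m$ replaced by $m_0$.
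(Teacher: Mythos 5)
Your proposal is correct and follows essentially the same route as the paper's proof: the same coding $x$ of the singular positions, the same lifted window $\tau' = m_0\tau \cup \{h_1,\dots,h_{m_0-1}\}$ hitting every residue class, the same disjointness-of-$C_i$ argument from the least-period property of $x$, and the same counting contradiction via \thmref{KZ_HM} and \factref{k+1} against the $\liminf$ hypothesis. The only difference is organizational: the paper works with the $m$-decomposition and case-splits on whether the least period of $x$ equals $m$ or some $m_0 < m$ (re-decomposing modulo $m_0$ to merge aperiodic residues that are congruent mod $m_0$), whereas you pass to the least period $m_0$ at the outset, which absorbs that merge case and handles both branches uniformly.
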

\begin{proof}
    Suppose that $\alpha'^{(p)}$, $0\leq p \leq m-1$, is aperiodic.  

    Let $k > m$ and let $\tau$ be a maximal $(k-(m-1))$-sized window over $\alpha'^{(p)}$.  Then $\#F_{\alpha'^{(p)}}(\tau) \geq 2k - 2(m-1)$ by \thmref{KZ_HM}.

    Define a $k$-sized window $\tau'$ over $\alpha'$ by $\tau' = m\tau \cup \{h_i : 1 \leq i \leq m-1\}$ for $h_1 > m\tau(k-(m-1) -1)$, $h_{i+1} > h_i$ for all $1 \leq i \leq m-2$, and $h_i \equiv i \mod m$ for all $1 \leq i \leq m-1$.

    Define $x \in (\A \cup \{?\})^{\N}$ by $x_i = \alpha'_i$ if $i \in S$ and $x_i = ?$ if $i \in A$.
    
    If $x$ has least period $m$, then $C_i(\tau', \alpha') \cap C_j(\tau', \alpha') = \emptyset$ for all $0 \leq i < j \leq m-1$. Furthermore, $\#C_p(\tau', \alpha') \geq 2k-2(m-1)$ since $C_p(\tau', \alpha')$ witnesses $\tau$ over $\alpha'^{(p)}$.
    
    If a second residue $\alpha'^{(i)}$, $1 \leq i \leq m-1$, were also aperiodic, then $\#F_{\alpha'^{(i)}}(\tau) \geq k - (m-1) +1$ by \factref{k+1}.  Thus $\#C_i(\tau', \alpha') \geq k-(m-1)+1$, and since $C_p(\tau', \alpha')$ and $C_i(\tau', \alpha')$ are disjoint, $\#F_{\alpha'}(\tau') \geq \#F_{\alpha'}(\tau') \geq 3k-3(m-1) + 1$ for all $k > m$, a contradiction. 

    If $x$ has least period $m_0 < m$, then $m_0$ divides $m$. Suppose that there is a second aperiodic residue, $\alpha'^{(i)}$.  By \lemref{CC}, $i \in A$.

    If $i \not\equiv p \mod m_0$, then $C_p(\tau', \alpha') \cap C_i(\tau', \alpha') = \emptyset$.  Thus, since $\#C_p(\tau', \alpha') \geq 2k - 2(m-1)$ and by \factref{k+1}, $\#C_i(\tau', \alpha') \geq k - (m-1) + 1$, then $\#F_{\alpha'}(\tau')\geq 3k - 3(m-1) + 1$, a contradiction.

    If $i \equiv p \mod m_0$ and for all $j \not\equiv 0 \mod m_0$, $\alpha'^{(j)}$ is periodic, and by \thmref{A_size}, $\#\A_A = 2$, then $\alpha'$ can be decomposed with respect to $m_0$.  We will denote this decomposition $\{\alpha'^{[i]} : 0 \leq i \leq m_0-1\}$. Then the only aperiodic residue is $\alpha'^{[p]}$ and all other residues either belong to $S$ and thus are singular for $\alpha'$, or are periodic over the alphabet of $\alpha'^{[p]}$. 

    Thus we have exactly one aperiodic residue over two letters.

    Suppose $c \notin \A_{\alpha'^{(p)}}$.  Then $c$ only appears in periodic residues.  Thus $c$ appears periodically in $\alpha'$ and thus must appear periodically in $\beta$.  Hence $c$ is singular for $\beta$ and must be singular for $\alpha'$.
    \end{proof}

By \thmref{gen_lemNR} and the discussion at the beginning of this section, the conclusion for \thmref{main} has been proved for when $\alpha$ is not recurrent, completing the proof of \thmref{main}.

\section{Strong Pattern Sturmian Sequences}
In this section we will prove \thmref{S_C} and \thmref{CON}.  First, \lemref{relating} and its immediate corollary introduce the relationship between the maximal pattern complexity of a sequence and the maximal pattern complexity of its aperiodic residue, which is unique by \thmref{main}.

\begin{lemma}
    \lemlab{relating}
    If $\liminf\limits_{k\to \infty}p_{\alpha}^*(k) - 3k = -\infty$ and $\alpha^{(p)}$ is the aperiodic residue from \thmref{main}, then $p_{\alpha}^*(k) \geq p_{\alpha^{(p)}}^*(k) + \ell - 2$.
\end{lemma}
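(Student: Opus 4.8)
The plan is to exhibit, for each fixed $k$, a single $k$-sized window over $\alpha$ that simultaneously captures all $p_{\alpha^{(p)}}^*(k)$ words produced by the aperiodic residue together with one extra word for each of the $\ell-2$ letters lying outside its alphabet $\{a,b\}$. First I would let $\tau$ be a maximal $k$-sized window for $\alpha^{(p)}$, so that $\#F_{\alpha^{(p)}}(\tau) = p_{\alpha^{(p)}}^*(k)$ and every word of $F_{\alpha^{(p)}}(\tau)$ is over $\{a,b\}$. I would then pass to the dilated window $m\tau$ over $\alpha$, where $m$ is the decomposition cycle. Since $\alpha^{(p)}_s = \alpha_{p+sm}$, reading $\alpha[n+m\tau]$ along the positions $n \equiv p \pmod m$ reproduces exactly the $\tau$-words of $\alpha^{(p)}$; that is, $C_p(m\tau,\alpha) = F_{\alpha^{(p)}}(\tau)$, which already contributes $p_{\alpha^{(p)}}^*(k)$ distinct words, all over $\{a,b\}$.

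Next I would account for the remaining $\ell-2$ letters. By \thmref{main} (established through \propref{rec_b} and \thmref{gen_lemNR}), every letter $c \in \A_{\alpha}\setminus\{a,b\}$ is singular for $\alpha$, so $c$ occupies a residue $\alpha^{(i_c)}$ that is eventually constantly equal to $c$. Because $F_{\alpha}(m\tau)$ ranges over all starting positions $n$, choosing $n \equiv i_c \pmod m$ large enough forces every coordinate of $\alpha[n+m\tau]$ into the constant tail of that residue, so the constant word $c^k$ belongs to $F_{\alpha}(m\tau)$. The $\ell-2$ words $c^k$ obtained this way are pairwise distinct, and none of them lies in $C_p(m\tau,\alpha)$, since each uses a letter different from both $a$ and $b$.

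Combining the two contributions yields
\[
\#F_{\alpha}(m\tau) \;\geq\; \#C_p(m\tau,\alpha) + (\ell-2) \;=\; p_{\alpha^{(p)}}^*(k) + \ell - 2,
\]
and since $p_{\alpha}^*(k) \geq \#F_{\alpha}(m\tau)$ for the particular window $m\tau$, the desired inequality follows for every $k$.

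The main obstacle I anticipate is justifying cleanly that each of the $\ell-2$ extra letters really does give rise to a full constant block $c^k$ inside $F_{\alpha}(m\tau)$, rather than a word in which $c$ is mixed with other symbols. This rests on the fact that singularity of $c$ forces its indicator sequence to be periodic with period dividing $m$, so $c$ occupies an entire residue class instead of being scattered across several residues; I would invoke the structural conclusions of \thmref{main} precisely at this point. I would also emphasize that eventual constancy (rather than genuine constancy) of the residue is enough, because $F_{\alpha}(m\tau)$ is taken over all positions $n$, which lets us slide past any finite nonconstant prefix of the residue before reading the window.
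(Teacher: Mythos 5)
Your proposal is correct and takes essentially the same approach as the paper: fix a maximal $k$-sized window $\tau$ for $\alpha^{(p)}$, pass to the dilated window $m\tau$ over $\alpha$, and note that $F_{\alpha}(m\tau)$ contains all of $F_{\alpha^{(p)}}(\tau)$ together with at least one extra word for each of the $\ell-2$ letters outside $\{a,b\}$. The paper's proof is simply terser---it asserts that ``there is at least one word beginning with each letter of the alphabet'' where you explicitly realize these extra words as the constant words $c^k$ coming from the (eventually) constant residues---but the window and the counting are identical.
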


\begin{proof}
    Let $k \in \N$ and let $\tau$ be a maximal $k$-sized window over $\alpha^{(p)}$.  Consider the window $m\tau$ over $\alpha$.  Then $\#F_{\alpha}(m\tau) \geq \#F_{\alpha^{(p)}}(\tau) + \ell - 2$ since there is at least one word beginning with each letter of the alphabet.  Thus $p_{\alpha}^*(k) \geq \#F_{\alpha}(m\tau) \geq p_{\alpha^{(p)}}^*(k) + \ell -2$.
\end{proof}

An immediate corollary of \lemref{relating} is the following:
\begin{corollary}
    If $p_{\alpha}^*(k) = 2k + C$ for some $C \geq \ell - 2$ and $\alpha^{(p)}$ is the only aperiodic residue, $0 \leq p \leq m-1$, then $p_{\alpha^{(p)}}^*(k) \leq 2k + D$ for some $0 \leq D \leq C - (\ell - 2)$ and all $k \in \N$.
\end{corollary}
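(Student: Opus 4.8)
The plan is to observe that the corollary follows by directly rearranging the inequality in \lemref{relating}, once we have confirmed that its hypotheses are in force. First I would verify that the standing assumption of \lemref{relating}, namely $\liminf_{k\to\infty} p_{\alpha}^*(k) - 3k = -\infty$, is a consequence of the corollary's hypothesis $p_{\alpha}^*(k) = 2k + C$. Indeed, under this hypothesis $p_{\alpha}^*(k) - 3k = C - k$, which tends to $-\infty$ as $k \to \infty$, so the liminf condition holds and \thmref{main} applies. In particular, the decomposition furnished by \thmref{main} has exactly one aperiodic residue, which we identify with the residue $\alpha^{(p)}$ named in the corollary; this is the same residue to which \lemref{relating} refers.

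Next I would simply apply \lemref{relating}, which gives
$$p_{\alpha}^*(k) \geq p_{\alpha^{(p)}}^*(k) + \ell - 2$$
for all $k \in \N$. Substituting the hypothesis $p_{\alpha}^*(k) = 2k + C$ and solving for $p_{\alpha^{(p)}}^*(k)$ yields
$$p_{\alpha^{(p)}}^*(k) \leq 2k + C - (\ell - 2)$$
for all $k \in \N$.

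Finally, I would set $D = C - (\ell - 2)$ to obtain $p_{\alpha^{(p)}}^*(k) \leq 2k + D$. The inequality $D \leq C - (\ell - 2)$ then holds with equality by definition, and the inequality $0 \leq D$ follows from the corollary's hypothesis $C \geq \ell - 2$, which rearranges precisely to $C - (\ell - 2) \geq 0$. Thus $D$ lies in the asserted range $[0,\, C - (\ell - 2)]$.

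Since the argument amounts to a single substitution into an already-established inequality, there is no genuine obstacle here; the only points requiring attention are bookkeeping ones, namely confirming that the liminf hypothesis of \lemref{relating} (and hence of \thmref{main}) follows from $p_{\alpha}^*(k) = 2k + C$, and that the residue $\alpha^{(p)}$ in the corollary is indeed the unique aperiodic residue guaranteed by \thmref{main}, so that \lemref{relating} may be invoked verbatim.
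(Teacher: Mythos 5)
Your proposal is correct and is precisely the argument the paper leaves implicit when it labels this an ``immediate corollary'' of \lemref{relating}: check that $p_{\alpha}^*(k)=2k+C$ forces $\liminf_{k\to\infty}p_{\alpha}^*(k)-3k=-\infty$, invoke the lemma, and rearrange to get $D=C-(\ell-2)\geq 0$. Your extra care in identifying $\alpha^{(p)}$ with the unique aperiodic residue from \thmref{main} is sound bookkeeping, not a deviation.
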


\lemref{looks_good} sets the foundation for proving \thmref{S_C} by showing that a nontrivial prefix of a non-recurrent sequence, as discussed in the beginning of Section 4, cannot occur when the maximal pattern complexity is restricted to $2k + \ell -2$.

\begin{lemma}
    \lemlab{looks_good}
    If $p_{\alpha}^*(k) = 2k + \ell -2$ for all $k \in \N$, then the aperiodic residue from \thmref{main} is over two letters, $a$ and $b$, and for any $c \in \A$ which is not equal to $a$ or $b$, $c$ is singular.
\end{lemma}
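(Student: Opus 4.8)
The plan is to prove this statement by contradiction. The conclusion of Lemma~\ref{lemma:looks_good} is exactly the conclusion of Proposition~\ref{prop:rec_b} (recurrent case) combined with Theorem~\ref{theo:A_size} and Theorem~\ref{theo:gen_lemNR} (non-recurrent case). Since the hypothesis $p_{\alpha}^*(k) = 2k + \ell - 2$ for all $k$ certainly implies $\liminf_{k \to \infty} p_{\alpha}^*(k) - 3k = -\infty$ (indeed $p_{\alpha}^*(k) - 3k = \ell - 2 - k \to -\infty$), Theorem~\ref{theo:main} already applies. So the existence of a unique aperiodic residue over at most two letters, with every other residue eventually constant, is immediate. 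What remains is to sharpen this: I must rule out the possibility that the non-recurrent prefix $w$ (from the decomposition $\alpha = w\alpha'$ at the start of Section~4) contributes extra $\tau$-words, and to confirm the aperiodic residue witnesses exactly two letters rather than being degenerate.

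**First I would** invoke Theorem~\ref{theo:main} directly to obtain the decomposition of $\alpha$ into $m > 1$ residues with exactly one residue eventually aperiodic over two letters $a, b$, and every residue containing finitely many $a,b$ symbols eventually constant. Call the aperiodic residue $\alpha^{(p)}$. By Fact~\ref{fact:k+1} applied to $\alpha^{(p)}$ (which is aperiodic and over the two letters $a,b$), and Theorem~\ref{theo:KZ_HM}, we have $p_{\alpha^{(p)}}^*(k) \geq 2k$ for all $k$. Then Lemma~\ref{lemma:relating} gives $p_{\alpha}^*(k) \geq p_{\alpha^{(p)}}^*(k) + \ell - 2 \geq 2k + \ell - 2$, which matches the hypothesis with equality. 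This forces $p_{\alpha^{(p)}}^*(k) = 2k$ exactly, and more importantly forces every inequality used to be tight.

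**The heart of the argument** is to show that no letter $c \neq a,b$ can appear aperiodically, i.e.\ each such $c$ is singular. Suppose toward a contradiction that some $c \in \A$, $c \neq a,b$, is \emph{not} singular. Since $c \neq a,b$, $c$ cannot belong to the unique aperiodic residue $\alpha^{(p)}$ (whose alphabet is $\{a,b\}$), so $c$ lives only in the non-aperiodic residues. Those residues are eventually constant by Theorem~\ref{theo:main}, but $c$ being non-singular means $c$ does not appear \emph{periodically} in $\alpha$ from the start---so $c$'s appearances must be confined to the finite non-recurrent prefix $w$ together with an eventually-constant tail. I would then build a maximal window $\tau$ over $\alpha^{(p)}$ witnessing $2k$ words, extend it to $m\tau$ over $\alpha$, and show that the ``defect'' caused by $c$ in the prefix produces at least one extra $\tau$-word beyond the tight count $2k + \ell - 2$---using the boundary position where $c$ transitions to its eventual constant value to manufacture a $\tau$-word of a new form, analogous to the $u_j c u_j'$ words exploited in Theorem~\ref{theo:A_size}. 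This contradicts $p_\alpha^*(k) = 2k + \ell - 2$.

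**The main obstacle** will be carefully controlling the interaction between the finite prefix $w$ and the singular decomposition: in the non-recurrent setting the decomposition is taken with respect to an auxiliary word $\beta$, and a letter $c$ can appear finitely often in a residue of $\alpha$ while still forcing structure on $\alpha'$. The delicate point is verifying that a non-singular $c$ genuinely injects a word not already counted among the $2k$ words from $\alpha^{(p)}$ and the $\ell - 2$ constant-residue words, without double-counting. I expect to handle this by pushing to $\alpha'$ where Theorem~\ref{theo:A_size} and Theorem~\ref{theo:gen_lemNR} guarantee $\A_A = \{a,b\}$ and every letter outside $\A_A$ is singular for $\alpha'$, then transferring singularity back to $\alpha$ via the relation $p_\alpha^*(k) = p_{\alpha'}^*(k) + C'$ and the equality $C' = 0$ forced by tightness.
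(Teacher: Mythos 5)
Your proposal proves the lemma as stated, and its engine is the same as the paper's: reduce to $\alpha'$ via \thmref{gen_lemNR}, take a maximal window $\tau$ over the aperiodic residue, pass to $m\tau$, count the $2k$ words over $\{a,b\}$ plus the $\ell-2$ constant words $d^k$, and show that a prefix ``defect'' of a non-singular letter $c$ injects one extra word (take the last occurrence of $c$ in its residue, so the new word is $cu$ with $u$ free of $c$, hence distinct from everything already counted) --- this is precisely the paper's first case in its proof of \lemref{looks_good}. Two differences are worth noting. First, you pin down the two-letter alphabet of $\alpha^{(p)}$ by tightness in \lemref{relating} (forcing $p^*_{\alpha^{(p)}}(k)=2k$, hence $p^*_{\alpha^{(p)}}(1)=2$), whereas the paper extracts this from the same defect counting (its word $cu$ with $u \in F_{\alpha'^{(p)}}(\tau\setminus\{\tau(k-1)\})$ handles a stray letter in the prefix of the aperiodic residue itself). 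Be careful here: the proof of \lemref{relating} tacitly assumes the words of $F_{\alpha^{(p)}}(\tau)$ begin only with $a$ or $b$, which is part of what you are proving, so invoking it before the prefix is controlled is circular in spirit; your own defect argument, applied to a letter $c$ in the prefix of $\alpha^{(p)}$, closes this cleanly. Second, and more substantively, the paper proves something strictly stronger than the statement: that the prefix $w$ is trivial, so every residue of $\alpha$ is exactly aperiodic or exactly periodic. That requires the paper's second case --- a residue with alphabet $\{a,b\}$ whose tail is periodic but which is not itself periodic --- which your proposal never confronts, and legitimately so, since such residues violate neither conclusion of the stated lemma. The trade-off: your argument is shorter and faithful to the wording, but the stronger fact is what the proof of \thmref{S_C} actually uses when it reduces to showing no residue is nonconstant periodic over $\A_{\alpha^{(p)}}$; under only the stated conclusion, a residue such as $ab^\infty$ survives, so your version of the lemma would not plug into \thmref{S_C} without additional work.
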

\begin{proof}
    If $\alpha$ is recurrent, this is clearly true by \thmref{gen_lemR} and \propref{rec_b}.  Therefore, we will assume that $\alpha$ is not recurrent.  By the discussion at the beginning of Section 4, there is a word $w$ such that $\alpha = w\alpha'$, all residues of $\alpha'$ are aperiodic or periodic, and since $p_{\alpha'}^*(k) \leq p_{\alpha}^*(k)$, by \thmref{gen_lemNR}, $\alpha'$ is a sequence with exactly one aperiodic residue over two letters, $a$ and $b$, and for any $c \in \A$, $c \neq a, b$, $c$ is singular.  Thus it suffices to show that $w$ is trivial.

    Assume, towards a contradiction, that $w$ is non-trivial.

    Define $x \in (\A_{\alpha} \cup \{?\})^{\N_0}$ by $x_i = ?$ if $\A_{\alpha'^{(i)}} \subseteq \{a, b\}$ and $x_i = \alpha'_i$ if $\A_{\alpha'^{(i)}} \not\subseteq \{a, b\}$.  Without loss of generality, we can assume $x$ has least period $m$, or else $\alpha$ could be decomposed with respect to $m_0 < m$, and $\alpha = w'\alpha''$ where $w$ is still non-trivial and $\alpha''$ has the same structure as $\alpha'$.

    Let $\alpha'^{(p)}$, $0 \leq p \leq m-1$, be the only aperiodic residue of $\alpha'$.

    Since $w$ is non-trivial, there is some residue $i$ such that either there exists some $c \in \A_{\alpha^{(i)}}$ such that $c \notin \A_{\alpha'^{(i)}}$ and $\A_{\alpha^{(i)}} \neq \{a, b\}$ or $\A_{\alpha^{(i)}} = \{a, b\}$, $\alpha'^{(i)}$ is periodic (potentially constant), and for any $t \in \N$, $\sigma^{t}(\alpha^{(i)}) \neq \alpha^{(i)}$.
    
    Consider the first case, where there exists some $c \in \A_{\alpha^{(i)}}$ such that $c \notin \A_{\alpha'^{(i)}}$ and $\A_{\alpha^{(i)}} \neq \{a, b\}$.

    Let $k \geq 2$ and let $\tau$ be a maximal $k$-sized window over $\alpha'^{(p)}$. Then $F_{\alpha'}(m\tau) = F_{\alpha'^{(p)}}(\tau) \cup \{d^k : d \in \A_{\alpha} \text{ and } d \neq a, b\}$ and so $\#F_{\alpha'}(m\tau) \geq 2k + \ell -2$.  We also note that $F_{\alpha'^{(p)}}(\tau)$ only contains words over the letters $a$ and $b$.
    
    However, since there is some residue $i$ for which there exists some $c \in \A_{\alpha^{(i)}}$ such that $c \notin \A_{\alpha'^{(i)}}$, then either the $m\tau$-word $cd^{k-1}$ for $c \notin \{a, b\}$, $cd^{k-1}$ for $d \notin \{a, b\}$, or $cu$ for $c \notin \{a, b\}$ and $u \in F_{\alpha'^{(p)}}(\tau\setminus\{\tau(k-1)\})$, must also appear in $F_{\alpha}(m\tau)$, and so $\#F_{\alpha}(m\tau) \geq 2k + \ell - 1$, a contradiction.

    Now consider the second case, where $\A_{\alpha^{(i)}} = \{a, b\}$, $\alpha'^{(i)}$ is periodic (potentially constant), and for any $t \in \N$, $\sigma^{t}(\alpha^{(i)}) \neq \alpha^{(i)}$.  We will further assume that the first case cannot occur, so constant residues in $\alpha'$ over letters other than $a$ and $b$ must also be constant in $\alpha$.

    Let $q$ be the least period of $\alpha'^{(i)}$.  Since $m$ is the least period of $x$, there exists some $j$, $0 \leq j \leq m-1$, such that $\A_{\alpha^{(i + j \mod m)}} \cap \A_{\alpha^{(p + j \mod m)}} = \emptyset$.  Without loss of generality, suppose that $p + j \not\equiv i \mod m$ (or else use the distance $-j \mod m$ instead).  

    Define a $q$-sized window $\tau$ by $\tau = \{0, m, 2m, ..., (q-1)m\}$.  Define an immediate extension $\tau'$ of $\tau$ by $\tau' = \tau \cup \{h\}$ for some $h > (q-1)m$ and $h \equiv j \mod m$.  Then $C_i(\tau', \alpha') \cap C_p(\tau', \alpha') = \emptyset$ and $\{\sigma^t(\alpha'^{(i)})_0\sigma^t(\alpha'^{(i)})_1...\sigma^t(\alpha'^{(i)})_{q-1} : 0 \leq t \leq q-1\} \cup \{\alpha^{(i)}_0\alpha^{(i)}_1...\alpha^{(i)}_{q-1}\} \subset C_i(\tau, \alpha)$, so $\#C_i(\tau, \alpha) \geq q + 1$.

    If $i + j \equiv p \mod m$ then $\#C_i(\tau', \alpha) \geq q + 2$ since at least one word from $C_i(\tau, \alpha)$ must have at least two extensions in $C_i(\tau', \alpha)$ or else $\alpha^{(p)}$ would be eventually periodic.  Furthermore, $\#C_p(\tau', \alpha) \geq q+1$ by \factref{k+1} since $\alpha^{(p)}$ is aperiodic.  Since there are at least $\ell - 2$ residues which are constant over letters not equal to $a$ or $b$, there are at least $\ell - 2$ more $\tau'$-words.  Thus, $\#F_{\alpha}(\tau') \geq 2q + 2 + \ell - 1$, a contradiction.

    If $i + j \not\equiv p \mod m$, then $\#C_i(\tau', \alpha) \geq q + 1$, $\#C_p(\tau', \alpha) \geq q+1$ by \factref{k+1}.  Since $C_i(\tau, \alpha) \cap C_p(\tau, \alpha) = \emptyset$, either $\alpha^{(p-j\mod m)} = c^{\infty}$ or $\alpha^{(i-j\mod m)} = c^{\infty}$ for some $c \notin \{a, b\}$.  Thus, either $\#C_{i-j\mod m}(\tau', \alpha) \geq 2$ or $\#C_{p-j\mod m}(\tau', \alpha) \geq 2$.  Then there are at least $\ell -3$ $\tau'$-words over the residues which are constant over letters not equal to $a$, $b$, or $c$.  Thus, $\#F_{\alpha}(\tau') \geq 2q + 2 + \ell - 1$, a contradiction.

    Thus, $w$ must be trivial.
\end{proof}

We will now prove \thmref{S_C}.

\begin{proofof}{\textit{\thmref{S_C}:}}
    Suppose that $\alpha$ is an aperiodic sequence such that $\#\A_{\alpha} = \ell \geq 3$ and $p_{\alpha}^*(k) = 2k+\ell - 2$ for all $k \in \N$.  
    
    By \lemref{looks_good}, exactly one residue is aperiodic over a two letter alphabet $\{a, b\}$.  Let $\alpha^{(p)}$, $0 \leq p \leq m-1$, be the aperiodic residue.  Also by \lemref{looks_good}, for all $c \in \A$ such that $c \notin \A_{\alpha^{(p)}}$, $c$ is singular for $\alpha$. 

    By \lemref{relating}, since $2k+\ell-2 = p_{\alpha}^*(k) \geq p_{\alpha^{(p)}}^*(k) + \ell - 2$, then $p_{\alpha^{(p)}}^*(k) \leq 2k$.  Since $\alpha^{(p)}$ is aperiodic, by \thmref{KZ_HM}, $p_{\alpha^{(p)}}^*(k) \geq 2k$, and so the aperiodic residue must have maximal pattern complexity $2k$, and is a pattern Sturmian over two letters.

    Since each letter not witnessed by $\alpha^{(p)}$ is singular, any residue on letters other than those in $\A_{\alpha^{(p)}}$ are constant.  Thus it suffices to show that there is no residue $\alpha^{(j)}$, $j \neq p$, which is nonconstant periodic over $\A_{\alpha^{(p)}}$.  

    Suppose that there is some residue $j$, $0 \leq j \leq m-1$ and $j \neq p$, such that $\alpha^{(j)}$ is nonconstant periodic over $\A_{\alpha^{(p)}} = \{a, b\}$. Let $k > m$, $\tau$ be a maximal $(k - (m-1))$-sized window over $\alpha^{(p)}$, and define a $k$-sized window $\tau'$ over $\alpha$ by $\tau' = m\tau \cup \{h_i : 1 \leq i \leq m-1\}$ for $h_1 > m\tau(k-(m-1) -1)$, $h_{i+1} > h_i$ for all $1 \leq i \leq m-2$, and $h_i \equiv i \mod m$ for all $1 \leq i \leq m-1$. Then $\#C_p(\tau', \alpha) \geq 2k - 2(m-1)$.  Since $\alpha^{(j)}$ is nonconstant and $\alpha^{(p)}$ is aperiodic, in every $C_i(\tau', \alpha)$ there are at least three distinct words classified by the letters appearing in the parts of the window centered over $\alpha^{(p)}$ and $\alpha^{(j)}$: two distinct words with an $a$ or $b$ in the $\alpha^{(j)}$ slot, and a third word since if the letter appearing in the $\alpha^{(j)}$ slot always forced the letter appearing in the $\alpha^{(p)}$ slot, then $\alpha^{(p)}$ would be periodic (either constant, equal to $\alpha^{(j)}$, or the bit flip of $\alpha^{(j)}$). Therefore, $\#F_{\alpha}(\tau) \geq 2k - 2(m-1) + 3(m-1) = 2k + m - 1$.  Furthermore, $m \geq \ell$ since $\alpha^{(p)}$ and $\alpha^{(j)}$ only use the letters $a$ and $b$, and there are at least $\ell -2$ other constant residues over the letters not equal to $a$ or $b$.  Therefore, $\#F_{\alpha}(\tau) \geq 2k + \ell - 1$, a contradiction.     
\end{proofof}

The next part of this section is dedicated to proving \thmref{CON}.
\begin{proofof}{\textit{\thmref{CON}:}}
    Suppose $\alpha$ is aperiodic, $\#\A_{\alpha} = \ell \geq 3$, and there exists some $m \geq 2$ such that the decomposition of $\alpha$ with respect to $m$ has one residue which is a pattern Sturmian over two letters and all other residues are constant.

    Let $\alpha^{(p)}$, $0 \leq p \leq m-1$, be the aperiodic pattern Sturmian over two letters.  

    Let $k \in \N$ and let $\tau$ be any $k$-sized window.  Define $\tau_i = \{\tau(j) : j \equiv i \mod m\}$ for all $0 \leq i \leq m-1$.  Then $\tau = \tau_0 \sqcup \tau_1 \sqcup ... \sqcup \tau_{m-1}$.  

    Suppose that $I = \{i: 0 \leq i \leq m-1 \text{ and } \tau_i \neq \emptyset\}$.

     Since any word in $C_i(\tau, \alpha)$ is completely determined by the letters of $\alpha^{(p)}$ which appear in it, $\#C_i(\tau, \alpha) \leq 2(\#\tau_{p-i \mod m})$.

    Furthermore, $\#\left(\bigcup_{i \notin I} C_{p-i \mod m}(\tau, \alpha)\right)\leq m-1-\#I$ since for all $i \notin I$, $C_{p-i \mod m}(\tau, \alpha)$ is a set of words which only see constant sequences, so each $C_{p-i \mod m}(\tau, \alpha)$ can contribute at most one word.

    Therefore, for all $k$-sized windows $\tau$, 
    $$\#F_{\alpha}(\tau) \leq \sum_{i \in I} 2(\#\tau_i) + \#\left(\bigcup_{i \notin I} C_i(\tau, \alpha)\right) \leq 2k + m-1-\#I \leq 2k + m-2.$$  
    Thus $p_{\alpha}^*(k) \leq 2k + m-2$.  By an application of \thmref{KZ_incr}, $p_{\alpha}^*(k+1) \geq p_{\alpha}^*(k) + 2$ for all $k \in \N$, and so for sufficiently large $k$, $p_{\alpha}^*(k) = 2k+C$ for some $\ell-2 \leq C \leq m-2$.
\end{proofof}

\section{Examples}

This section introduces some relevant examples.  \expref{Lowest_Complexity} demonstrates the existence of strong pattern Sturmian sequences over any finite alphabet.

\begin{example}
    \explab{Lowest_Complexity}
    We will define a sequence $\alpha \in \{0, 1, ..., \ell-1\}^{\N_0}$ which has maximal pattern complexity $p_{\alpha}^*(k) = 2k + \ell - 2$.

    Let $x \in \{0, 1\}^{\N_0}$ be a pattern Sturmian sequence over two letters.  Define $\alpha \in \{0, 1, ..., \ell-1\}^{\N_0}$ by 
    \[
    \alpha_{(\ell - 1)i + j} = \begin{cases}
        x_i & \text{if } j \equiv 0 \mod \ell -1\\
        j + 1 & \text{if } j\not\equiv 0 \mod \ell -1.
    \end{cases}
    \]
    
    The singular decomposition cycle of $\alpha$ is $\ell - 1$.

    Let $k \in \N$ and let $\tau$ be any $k$-sized window. Define $\tau_i = \{\tau(j) : j \equiv i \mod m\}$ for all $0 \leq i \leq m-1$.  Then $\tau = \tau_0 \sqcup \tau_1 \sqcup ... \sqcup \tau_{m-1}$. 

    Suppose that $I = \{i : 0 \leq i \leq \ell -2 \text{ and } \tau_i \neq \emptyset\}$.

    Since any two residues have disjoint alphabets, for all $0 \leq i < j \leq \ell - 2$, $C_i(\tau, \alpha)\cap C_j(\tau, \alpha) = \emptyset$.

    Furthermore, $C_i(\tau, \alpha) \leq \max\{2(\#\tau_{\ell -1 -i}), 1\}$, and so $\#F_{\alpha}(\tau) = \sum_{i = 0}^{\ell-2}\#C_i(\tau, \alpha) \leq 2k + (\ell - 1 - \#I) \leq 2k + \ell - 2$ since $\#I \geq 1$.

    Since this is true for any $k$-sized window $\tau$, we have that $p_{\alpha}^*(k) \leq 2k + \ell -2$ and since by \thmref{LB}, $p_{\alpha}^*(k) \geq 2k + \ell -2$, for this sequence, $p_{\alpha}^*(k) = 2k + \ell -2$.  
\end{example}

\expref{Constant_Higher} is a sequence which has the structure outlined in \thmref{CON} but has maximal pattern complexity strictly greater than $2k+\ell -2$ demonstrating that the converse of \thmref{S_C} is not true.

\begin{example}
    \explab{Constant_Higher}
    We will define a sequence $\alpha \in \{0, 1, 2, 3\}^{\N_0}$ which is a pattern Sturmian sequence zippered with constant sequences, but has maximal pattern complexity $p_{\alpha}^*(k) = 2k + C$ for $C > \ell - 2$.

    Let $x \in \{0, 1\}^{\N_0}$ be a pattern Sturmian over two letters.  Define $\alpha$ by
    \[
    \alpha_{5i + j} = \begin{cases}
        x_i & \text{if } j \equiv 0 \mod 5\\
        2 & \text{if } j\equiv 1, 4 \mod 5\\
        3 &\text{if } j \equiv 2, 3 \mod 5.
    \end{cases}
    \]

    Let $\tau$ be a maximal window for $x$ of size $k - 1$.  Define $\tau'$ by $\tau'(i) = m\tau(i)$ for all $0 \leq i \leq k-2$ and $\tau'(k-1) = h$ for some $h \equiv 1 \mod 5$ and $h > m\tau(k-2)$.

    Then $\#C_0(\tau', \alpha) = 2(k-1)$ since $\alpha^{(0)}$ is a pattern Sturmian sequence over two letters and $\tau$ is a maximal window for $\alpha^{(0)}$.  Furthermore, $\#C_4(\tau, \alpha) = 2$ since $C_4(\tau, \alpha) = \{2^{k-1}0, 2^{k-1}1\}$, and $\#C_1(\tau, \alpha) = \#C_2(\tau, \alpha) = \#C_3(\tau, \alpha) = 1$ since $C_1(\tau, \alpha) = \{2^{k-1}3\}$, $C_2(\tau, \alpha) = \{3^k\}$, and $C_3(\tau, \alpha) = \{3^{k-1}2\}$.  Clearly $\{C_i(\tau', \alpha) : 0 \leq i \leq 4\}$ are pairwise disjoint.  Thus, $\#F_{\alpha}(\tau') = 2k + 3 > 2k + (4-2) = 2k + \ell - 2$ for $\ell = 4$.
    
\end{example}

\expref{4k_2aper} is a uniformly recurrent sequence with two aperiodic residues which has complexity $4k$, showing that the bound in \thmref{main} cannot be substantially improved below $4k$.

\begin{example}
    \explab{4k_2aper}
    We will create a sequence which has complexity $4k$ and is two recurrent pattern Sturmian sequences zippered together with a constant sequence.

    Let $x \in \{0, 1\}^{\N_0}$ be the Fibonacci substitution defined by $0 \to 01$ and $1 \to 0$, a Sturmian and therefore pattern Sturmian sequence, which is also uniformly recurrent.

    Define $\alpha \in \{0, 1, 2\}^{\N_0}$ by

    \[
    \alpha_{3i + j} = \begin{cases}
        x_i & \text{if } j \equiv 0, 1 \mod 3\\
        2 &\text{if } j \equiv 2 \mod 3.
    \end{cases}
    \]

    First, we must show that $\alpha$ is uniformly recurrent. Let $L > 0$ and let $R_L(\alpha) = \{M \in \N : \alpha_0...\alpha_{L-1} = \alpha_{M}...\alpha_{M+L-1}\}$.  Let $L' = \lceil\frac{L}{3}\rceil$ and define $R_L'(x) = \{M \in \N : x_0...x_{L'-1} = x_{M}...x_{M+L'-1}\}$.  Since $x$ is uniformly recurrent, $R_{L'}(x) = \{M_1, M_2, ...\}$ is syndetic and for any $M_i$, $x_0x_1...x_{L'-1} = x_{M_i}x_{M_i + 1}...x_{M_i + L'-1}$.  Thus, $x_0x_02x_1x_12...x_{L'-1}x_{L'-1}2 = x_{M_i}x_{M_i}2x_{M_i + 1}x_{M_i + 1}2...x_{M_i + L'-1}x_{M_i + L'-1}2$ and so $R_L(\alpha) = \{3M_1, 3M_2, ...\}$ is syndetic and thus $\alpha$ is a uniformly recurrent sequence.

    Now we will show that $\alpha$ has maximal pattern complexity $p_{\alpha}^*(k) = 4k$ for all $k \geq 2$.

    First, we show that for any $k$-sized window $\tau$ over $\alpha$, $p_{\alpha}^*(k) \leq 4k$.

     Let $k \in \N$ and let $\tau$ be any $k$-sized window over $\alpha$. If for all $0 \leq i \leq k-1$, $\tau(i) \equiv 0 \mod 3$ (ie $\tau$ only sees one residue at a time), then $C_0(\tau, \alpha) = C_1(\tau, \alpha)$ since $\alpha^{(0)} = \alpha^{(1)} = x$, and $\#C_0(\tau, \alpha) \leq 2k$ since $x$ is a pattern Sturmian sequence.  Furthermore, $C_2(\tau, \alpha) = \{2^k\}$.  Thus, $\#F_{\alpha}(\tau) \leq 2k + 1$.
    
    Suppose now that $\tau = \tau_1 \sqcup \tau_2 \sqcup \tau_3$ for $\tau_i = \{\tau(j): \tau(j) \equiv i \mod 3\}$ and there exists $i\neq j$ such that $\#\tau_i, \#\tau_j > 0$. Then, for all $0 \leq i < j \leq 2$, $C_i(\tau, \alpha) \cap C_j(\tau, \alpha) = \emptyset$ since the position of the $2$s uniquely determines to which set $C_i(\tau, \alpha)$ a $\tau$-word belongs.

    Then $\#C_0(\tau', \alpha) \leq 2(\#\tau_0) + 2(\#\tau_1)$, $\#C_1 \leq 2(\#\tau_0) + 2(\#\tau_2)$ and $\#C_2 \leq 2(\#\tau_1) + 2(\#\tau_2)$.  Therefore 
    $$\#F_{\alpha}(\tau) \leq 2(\#\tau_0) + 2(\#\tau_1) + 2(\#\tau_0) + 2(\#\tau_2) + 2(\#\tau_1) + 2(\#\tau_2) = 4k $$

    Thus for all $k \in \N$ and any $k$-sized window $\tau$, $\#F_{\alpha}(\tau) \leq 4k$, and so $p_{\alpha}^*(k) \leq 4k$ for all $k \in \N$.

    Now we will show that for all $k \geq 2$ there exists some window $\tau'$ such that $\#F_{\alpha}(\tau') = 4k$.

    Let $\tau$ be a maximal $k$-sized window for $x$ such that the immediate restriction of $\tau$ is a maximal $(k-1)$-sized window.  Define $\tau'$ by $\tau'(i) = 3\tau(i)$ for all $0 \leq i \leq k-2$ and $\tau'(k-1) = 3\tau(k-1) + 1$.

    For all $0 \leq i < j \leq 2$, $C_i(\tau, \alpha) \cap C_j(\tau, \alpha) = \emptyset$ since words in $C_0(\tau', \alpha)$ are of the form $w_0$ for $w_0 \in \{0, 1\}^{k}$, words in $C_1$ are of the form $w_12$ for $w_1 \in \{0, 1\}^{k-1}$, and words in $C_2$ are of the form $2^{k-1}w_3$ for $w_3 \in \{0, 1\}$.
    
    Then $\#C_0(\tau', \alpha) = 2k$ since $C_0(\tau', \alpha) = F_{x}(\tau)$, $\#C_1(\tau', \alpha) = 2k-2$ since $C_1(\tau', \alpha) = \{w_12 : w_1 \in F_{x}(\tau\setminus \{\tau(k-1)\})\}$, and $\#C_2(\tau', \alpha) = 2$ since $C_2(\tau', \alpha) = \{2^{k-1}0, 2^{k-1}1\}$.

    Thus, $\#F_{\alpha}(\tau') = 4k$ and so $p_{\alpha}^*(k) = 4k$ for all $k \in \N$.
\end{example}

\section*{Acknowledgments}
The author would like to express her gratitude to her advisor Prof. Ronnie Pavlov for suggesting this question and for the numerous helpful comments, suggestions, and conversations during the formation of this paper.

%-------------------------------------------------------------------------

\bibliographystyle{alpha}
\bibliography{bib}

\end{document}